\documentclass[11pt]{amsart}
\usepackage{amsmath}
\usepackage{amsfonts}
\usepackage{amssymb}
\usepackage[normalem]{ulem} 

\usepackage[utf8]{inputenc}
\usepackage[T2A]{fontenc}
\usepackage[russian,english]{babel}
\inputencoding{utf8}

\newtheorem{theorem}{Theorem}[section]

\newtheorem{corollary}[theorem]{Corollary}
\newtheorem{proposition}[theorem]{Proposition}
\theoremstyle{definition}
\newtheorem{definition}[theorem]{Definition}

\theoremstyle{remark}

\numberwithin{equation}{section}
\numberwithin{equation}{section}

\title[Meromorphic modular differential equations]
{Meromorphic modular differential equations for the elliptic genus of Calabi--Yau varieties}
\author{Dmitrii Adler}
\address{Department of Mathematics and Computer Science, Saint Petersburg State University, Saint Petersburg}
\email{dmitry.v.adler@gmail.com}
\author{Valery Gritsenko}
\address{International laboratory of mirror symmetry and automorphic forms, HSE University, Moscow and Laboratoire Paul Painlev\'e, Universit\'e de Lille}
\email{valery.gritsenko@univ-lille.fr}
\subjclass[2010]{11F50,17B69, 32W50, 58J26}
\date{\today}
\begin{document}
\maketitle

\begin{abstract} 
We study differential equations -- not necessarily monic -- for Jacobi modular forms.  As an application, we derive a second-order differential equation for the elliptic genus of a $K3$ surface involving the logarithmic derivative of the modular invariant $j(\tau)$, and we explicitly find a third-order equation
with a leading coefficient of weight 12  for the elliptic genus of any four-dimensional Calabi--Yau variety.  Furthermore, we propose a simple recursive algorithm for investigating second-order differential equations with leading coefficient $E_4(\tau)$ or $E_6(\tau)$.
\end{abstract}

\section{Introduction: MDEs and MMDEs}
Let $\varphi_{k,m}(\tau, z)$ be a weak Jacobi form of weight $k$ and index $m$, where $\tau \in \Bbb {H}$ lies in the upper half-plane and $z\in \Bbb C$. 
A  {\it modular differential equation} (or {\it MDE}) {\it of order $n$} is a differential equation of the form
\begin{equation}\label{MDEdef}
H_k^{[n]}(\varphi_{k,m})+\sum_{t=1}^{n-2} f_{2(n-t)}(\tau)H_k^{[t]}(\varphi_{k,m})=0
\end{equation}
where $H_k^{[n]}$ denotes the $n$th iterate, defined in \eqref{Hn-iter}, of the modular differential operator $H_k$ for Jacobi forms. This operator generalizes the heat operator (see \eqref{Hk}), and each coefficient $f_a\in M_a(SL_2(\Bbb Z))$ is a modular form of weight $a$.

We studied MDEs in a series of papers \cite{AG22}--\cite{AG25}. The choice of weak Jacobi forms is motivated by the polynomial nature of their bigraded ring. It is known (see \cite{EZ}) that a Jacobi  form can be represented as a vector-valued modular form.  
The general theory of modular differential equations for vector-valued modular forms one can find in \cite{M},  \cite{FM} and \cite{Y}.
Our approach differs in that we replace differentiation with respect to $\tau$ by the heat operator. To prove that a Jacobi form satisfies a modular differential equation, it suffices to carry out elementary polynomial calculations involving its Fourier coefficients at fixed powers of $q=e^{2\pi i\tau}$, viewed as Laurent polynomials in $\zeta=e^{2\pi iz}$. This makes it straightforward to investigate possible modular differential equations for the elliptic genera of complex varieties with vanishing first Chern class, provided their dimension is not too large (for example, up to complex dimension 13). In particular, we derived a third-order MDE satisfied by the elliptic genera of K3 surfaces and Calabi--Yau fourfolds with Euler characteristic 48 or $-18$ (see \cite{AG23}). 
For $N=2$ superconformal field theories, elliptic genera are a natural generalization of chiral characters. Moreover, elliptic genera can be computed geometrically in terms of Gromov--Witten invariants in dual string compactifications (see \cite{OP}).

As in the study of differential equations for characters of vertex operator algebras \cite{KNS}, we focused on monic differential equations, that is, equations with leading coefficient 1. The equations we obtained in \cite{AG22}--\cite{AG25}  for Jacobi forms have the lowest possible order among monic equations.
In this paper, we investigate differential equations that are not necessarily monic, namely, equations of the form
\begin{equation}\label{MDEdef}
f_s(\tau)H_k^{[n]}(\varphi_{k,m})+\sum_{t=1}^{n-2} f_{2(n-t)+s}(\tau)H_k^{[t]}(\varphi_{k,m})=0.
\end{equation}
We call such an equation a {\it meromorphic modular differential equation} ({\it MMDE} or 
{\it $f_s$-MMDE}) of order $n$ with leading coefficient $f_s(\tau)$, since dividing by $f_s(\tau)$ yields a differential equation whose coefficients are meromorphic modular forms. In particular, we derive a second-order differential equation for the elliptic genus of a K3 surface involving the logarithmic derivative of the modular invariant $j(\tau)$ (see Proposition \ref{Eq-phi01} and \eqref{MEphi01}).

We propose a simple recursive algorithm for investigating second-order differential equations with leading coefficient $E_4(\tau)$ or $E_6(\tau)$. As a corollary, we show that the second-order MMDE for the elliptic genus of K3 surfaces is highly exceptional  (see Theorem \ref{E-CY2d}). In Theorem 5.1, we also obtain an explicit third-order MMDE for the elliptic genus of any Calabi--Yau fourfold or, more generally, for any weak Jacobi form of weight $0$ and index $2$. Up to multiplication by a nonzero constant, there are exactly three special forms satisfying the monic third-order MDEs found in \cite{AG23}, one additional form each satisfying an $E_4$-MMDE and an $E_8$-MMDE, and two additional forms satisfying $E_6$-MMDEs. For a generic weak Jacobi form of weight $0$ and index $2$, the leading coefficient $f_s$ of the third-order equation \eqref{MDEdef} has weight $12$.

\section{Examples of MMDEs}

\subsection{The graded ring of weak Jacobi forms and the operator $H_k$}

The elliptic genus of a Calabi--Yau variety of complex dimension $d$ is a weak Jacobi form of weight $0$ and index $\frac{d}{2}$ with integral Fourier coefficients.
For completeness, we recall the necessary definitions.

\begin{definition}\label{def:JF}
Let $k\in \Bbb Z$ and $m\in \frac{1}{2}\Bbb N$.
A holomorphic function $\varphi:\Bbb H\times\Bbb C\to\Bbb C$ is a weak Jacobi form of weight $k$ and index $m$ if it satisfies the transformation laws
\begin{align*}
\varphi\bigl(\frac{a\tau+b}{c\tau+d},\frac{z}{c\tau+d}\bigr)&
=(c\tau+d)^ke^{2\pi i m \frac{cz^2}{c\tau+d}}\,\varphi(\tau,z),\\
\varphi(\tau,z+x\tau+y)&=(-1)^{2m(x+y)}\,e^{-2\pi i m (x^2\tau+2xz)}\varphi(\tau,z)
\end{align*}
for all $\left(\begin{smallmatrix}
a & b \\
c & d \end{smallmatrix}\right)\in SL_2(\Bbb Z)$ and $x,y\in\Bbb Z$,
and admits a Fourier expansion of the form
\begin{equation*}
\varphi(\tau,z)=\sum_{\substack{n\geq 0,\ l\in \frac{1}{2}\Bbb Z}} a(n,l)\exp(2\pi i(n\tau+lz)),
\end{equation*}
where the exponents $l$ are integral when $m$ is integral and half-integral when $m$ is half-integral.
\end{definition}

We denote by $J_{k,m}^w$ the finite-dimensional vector space of weak Jacobi forms of weight $k$ and index $m$. For $m=0$, the space $J_{k,0}^w=M_k(SL_2(\Bbb Z))$ coincides with the space of modular forms of weight $k$.

Jacobi forms of half-integral index are related to the odd Jacobi theta series $\vartheta(\tau,z)$, which is a Jacobi form of weight $\frac{1}{2}$ and index $\frac{1}{2}$ with a multiplier system of order $8$ (see \cite{GN98}):
\begin{equation*}\label{theta}
\begin{split}
\vartheta(\tau ,z)&=
q^{1/8}(\zeta^{1/2}-\zeta^{-1/2})\prod_{n\ge 1}\,(1-q^{n}\zeta)(1-q^n 
\zeta^{-1})(1-q^n)\\
{}&=q^{\frac{1}{8}}\zeta^{\frac{1}{2}}
\sum_{n\in \Bbb Z}\,(-1)^nq^{\frac{n(n+1)}{2}}\zeta^{n}, \quad
q=e^{2\pi i \tau},\ \zeta=e^{2\pi i z}.
\end{split}
\end{equation*}
The theta series is related to two other important functions through the identities
\begin{equation*}\label{G2}
\frac{\partial\vartheta(\tau ,z)}{\partial z}\big|_{z=0}
=2\pi i \,\eta(\tau )^3,\quad
\dfrac{\partial^2 \log \vartheta(\tau,z)}{\partial z^2}=
-\wp(\tau,z)-\frac{\pi^2}3 E_2(\tau),
\end{equation*}
where  
$\eta(\tau)\in M_{\frac{1}2}(SL_2(\Bbb Z),v_\eta)$ is the Dedekind eta function ($v_\eta^{24}=1$), $\wp(\tau,z)$ is the Weierstrass $\wp$-function, and 
$ 
E_2(\tau)=1-24\sum_{n\ge 1 }\sigma_1(n)q^n
$
is the quasi-modular Eisenstein series of weight $2$.
Using 
$\vartheta(\tau ,z),$ we obtain two main generators of the graded ring  weak Jacobi forms
\begin{equation*}\label{phi-21}
\varphi_{-2,1}=\frac{\vartheta(\tau,z)^2}{\eta(\tau)^6}
=\zeta^{\pm 1}-2+ 
q(-2\zeta^{\pm 2} +8\zeta^{\pm 1}-12)+O(q^2),
\end{equation*}
\begin{equation}\label{phi01}
\varphi_{0,1}=-\frac{3}{\pi^2}\wp(\tau,z)\varphi_{-2,1}=
\zeta^{\pm 1}+10+ q(10\zeta^{\pm 2} - 64\zeta^{\pm 1}+108)+O(q^2)
\end{equation}
where $\zeta^{\pm l}= \zeta^{l}+\zeta^{-l}$. Three other generators are defined below.
 
By \cite[Theorem 9.3]{EZ}, the bigraded ring of weak Jacobi forms of even weight and integral index is a polynomial ring in four generators:
\begin{equation}\label{J2*}
J_{2*,*}^w=\bigoplus_{k\in \Bbb Z,\, m\in \Bbb Z_\ge 0}  
J_{2k,m}^w=
\Bbb C[E_4, E_6, \varphi_{-2,1},\varphi_{0,1}]
\end{equation}
where 
$E_4(\tau)=1+240\sum_{n\ge 1}\sigma_3(n)q^n$ and 
$E_6(\tau)=1-504\sum_{n\ge 1}\sigma_5(n)q^n$
are the  Eisenstein series of weight $4$ and $6$, respectively.
 
For weak Jacobi forms of odd weight or half-integral index, the following relations hold (see \cite{EZ} and \cite{GN98}):
$$
J_{2k+1,m}^w=\varphi_{-1,2}\cdot J_{2k+2,m-2}^w,\quad
J_{2k+1,m+\frac{1}{2}}^w=\varphi_{-1,\frac{1}{2}}\cdot J_{2k+2,m}^w,
$$
$$
J_{2k,m+\frac{1}{2}}^w=\varphi_{0,\frac{3}{2}}\cdot J_{2k,m-1}^w
$$ 
for all
$k\in \Bbb Z$, $m\in \Bbb N$,
where 
\begin{equation*}\label{phi32}
 \varphi_{-1,\frac{1}{2}}(\tau,z)=\frac{\vartheta(\tau,z)}
{\eta^3(\tau)}, \ \varphi_{-1,2}(\tau,z)=
\frac{\vartheta(\tau,2z)}{\eta^3(\tau)},\ 
\varphi_{0,\frac{3}{2}}(\tau,z)=\frac{\vartheta(\tau,2z)}
{\vartheta(\tau,z)}.
\end{equation*}

The modular differential operator  $D_k: M_k\to M_{k+2}$ is defined by
$$
D_k(f(\tau))=D(f(\tau))-\frac{k}{12}E_2(\tau)\cdot f(\tau), \qquad D=q\frac{d\ }{dq}=\frac{1}{2\pi i}\frac{d\ }{d\tau}.
$$
An analogous differential operator on weak Jacobi forms is defined by
\begin{equation}\label{Hk}
H_k:  J_{k,m}^w\to  J_{k+2,m}^w, \quad H_{k}(\varphi_{k,m})=
H^{(m)}(\varphi_{k,m}) -\frac{2k-1}{2}E_2(\tau)\varphi_{k,m},
\end{equation}
where $H^{(m)}$ is the renormalized {\it heat operator}  
\begin{equation}\label{heat}
{H}^{(m)}=\frac{3}{m}\frac 1{(2\pi i)^2}
\left( 8\pi i m\dfrac{\partial\ }{\partial \tau}-
\dfrac{\partial^2 \  }{\partial z^2}\right)
=12q\frac {d\ }{dq}- \frac{3}m\left(\zeta\frac {d\ }{d\zeta}\right)^2.
\end{equation} 
We choose this normalization to simplify calculations. Throughout the paper, we write $H=H^{(m)}$, since the index $m$ will be clear from the context. The operator $H_k$ maps weak, holomorphic, and cusp Jacobi forms to Jacobi forms of the same type. 
In terms of Fourier coefficients, its action is given by (see \cite[Proposition 2.1]{AG23})
\begin{multline}\label{Hk-act}
 H_k(\varphi_{k,m})(\tau, z)=
 \sum_{n\ge 0,\,l} q^n\Bigl[
  \sum_l \Bigl(\frac 3m(4nm-l^2)-\frac{2k-1}2\Bigr)a(n,l)\zeta^l\\
  \qquad +12(2k-1)\sum_{s=1}^n\sigma_1(s)a(n-s,l)\zeta^l \Bigr].
\end{multline}
Iterating this first-order operator with respect to the heat operator yields the higher-order operators
\begin{align}\label{Hn-iter}
H^{[n]}_k=H_{k+2(n-1)}\circ \ldots \circ H_k: J_{k,m}^w\to J_{k+2n,m}^w.
\end{align}

\subsection{MMDEs for the elliptic genera of K3 surfaces and Calabi--Yau fivefolds}

In \cite{AG22}--\cite{AG25}, we showed that the elliptic genus of a Calabi--Yau variety does not satisfy an MDE of order one or two unless the variety has complex dimension $3$. We also found third-order MDEs for the elliptic genera of K3 surfaces, arbitrary Calabi--Yau fivefolds, and two Calabi--Yau fourfolds with Euler characteristics $48$ and $-18$ (see \cite[\S 5.3]{AG23}). In this subsection, we show that in some cases the order of the differential equation can be reduced by passing to MMDEs.

We recall the formulae for the elliptic genera of an arbitrary K3 surface and a Calabi--Yau fivefold (see \cite{Gr99}):
$$
EG(K3;\tau, z)=2\varphi_{0,1}(\tau,z),\quad
EG(CY_5;\tau, z)=\frac{e(CY_5)}2\varphi_{0,\frac 52}(\tau,z)
$$
where $e(CY_5)$ denotes the Euler characteristic of $CY_5$, $\varphi_{0,1}$ is defined in \eqref{phi01}, and  $\varphi_{0,\frac 52}=\varphi_{0,1}\varphi_{0,\frac 32}$.
In \cite{AG22}, we obtained the following monic MDEs of minimal order:
\begin{equation*}\label{eqK3}
\bigl(H_0^{[3]}-\tfrac{101}{4}E_4H_0+10E_6\bigr)\varphi_{0,1}=0,
\ \ 
\bigl(H_0^{[3]}-\tfrac{611}{25}E_4H_0+\tfrac{88}{25}E_6\bigr)\varphi_{0,\frac 52}=0.
\end{equation*}

\begin{proposition}\label{Eq-phi01} 
The following identities hold: 
\begin{equation*}\label{eq2}
\bigl(E_4H_0^{[2]}+4E_6H_0-\tfrac{5}{4}E_4^2\bigr)\varphi_{0,1}=0,\quad
\bigl(E_4H_0^{[2]}+4E_6H_0-\tfrac{11}{25}E_4^2\bigr)\varphi_{0,\frac{5}{2}}=0.
\end{equation*}
\end{proposition}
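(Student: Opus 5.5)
The plan is to reduce each identity to a finite check in a finite-dimensional space of weak Jacobi forms, using the structure theorem \eqref{J2*} and the half-integral index relations.

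\emph{First identity.} The left-hand side is a weak Jacobi form of weight $8$ and index $1$, because $H_0^{[2]}\varphi_{0,1}\in J^w_{4,1}$ and $H_0\varphi_{0,1}\in J^w_{2,1}$. By \eqref{J2*} the space $J^w_{8,1}$ is spanned by $E_4^2\varphi_{0,1}$ and $E_4E_6\varphi_{-2,1}$, so it is two-dimensional. A weak Jacobi form of index $1$ is determined by its $q^0$ coefficient. Indeed, the $q^0$ coefficients $\zeta^{\pm1}+10$ and $\zeta^{\pm1}-2$ of the two basis elements are linearly independent; if a combination had vanishing $q^0$ term, it would be divisible by $q$, and dividing by $\Delta$ would give a weak form of weight $-4$. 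That form is $c\,\varphi_{-2,1}E_6/E_4\cdot(\dots)$, which is impossible, or one can simply invoke the known bound on the order of vanishing in $q$ for weak forms of index $1$. So it suffices to compute the $q^0$ term of the left-hand side, and this is where \eqref{Hk-act} enters.

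For $n=0$ the $\sigma_1$ sum in \eqref{Hk-act} is empty, so $H_k$ acts on the $q^0$ coefficient of $\zeta^l$ by the scalar $-3l^2/m-\frac{2k-1}{2}$. With $m=1$:
\begin{itemize}
\item $H_0$ multiplies $\zeta^{\pm1}$ by $-3+\frac12=-\frac52$ and $10$ by $\frac12$, so $H_0\varphi_{0,1}=-\frac52\zeta^{\pm1}+5+O(q)$.
\item $H_2$ multiplies $\zeta^{\pm1}$ by $-3-\frac32=-\frac92$ and the constant by $-\frac32$, so $H_0^{[2]}\varphi_{0,1}=\frac{45}{4}\zeta^{\pm1}-\frac{15}{2}+O(q)$.
\end{itemize}
Since $E_4,E_6$ have constant term $1$, the $q^0$ term of the left-hand side is
$$\Bigl(\tfrac{45}{4}-10-\tfrac54\Bigr)\zeta^{\pm1}+\Bigl(-\tfrac{15}{2}+20-\tfrac{25}{2}\Bigr)=0.$$
Hence the left-hand side vanishes identically.

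\emph{Second identity.} Write $\varphi_{0,\frac52}=\varphi_{0,\frac32}\varphi_{0,1}$. The left-hand side lies in $J^w_{8,\frac52}=\varphi_{0,\frac32}\cdot J^w_{8,1}$, which is again two-dimensional. After dividing by $\varphi_{0,\frac32}$, the same $q$-order argument shows that an element is determined by its $q^0$ term. Since $\varphi_{0,\frac32}=\zeta^{1/2}+\zeta^{-1/2}+O(q)$, we get
$$\varphi_{0,\frac52}=\zeta^{\pm3/2}+11\zeta^{\pm1/2}+O(q).$$
With $m=\frac52$ the scalars $-\frac{6l^2}{5}-\frac{2k-1}{2}$ are:
\begin{itemize}
\item for $H_0$: $-\frac{27}{10}+\frac12=-\frac{11}{5}$ on $l=\pm\frac32$, and $-\frac3{10}+\frac12=\frac15$ on $l=\pm\frac12$;
\item for $H_2$: $-\frac{27}{10}-\frac32=-\frac{21}{5}$ on $l=\pm\frac32$, and $-\frac3{10}-\frac32=-\frac95$ on $l=\pm\frac12$.
\end{itemize}
The coefficient of $\zeta^{\pm3/2}$ in the left-hand side is then
$$\tfrac{231}{25}-\tfrac{44}{5}-\tfrac{11}{25}=0,$$
and the coefficient of $\zeta^{\pm1/2}$ is
$$11\Bigl(-\tfrac{9}{25}+\tfrac45-\tfrac{11}{25}\Bigr)=0.$$

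\emph{Main obstacle.} The only nonroutine step is justifying that the $q^0$ term determines a weak Jacobi form of weight $8$ and index $1$. I would argue this as follows. If such a form $F=aE_4^2\varphi_{0,1}+bE_4E_6\varphi_{-2,1}$ is $O(q)$, then the $q^0$ coefficients force $a=b=0$, because $\zeta^{\pm1}+10$ and $\zeta^{\pm1}-2$ are independent. So the check is immediate from the explicit basis.
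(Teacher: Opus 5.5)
Your proof is correct. It differs from the paper's in where the $q^0$ comparison is made.

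The paper works in the lower-weight spaces. It uses the one-dimensionality of $J^w_{-2,1}$, $J^w_{0,1}$, $J^w_{2,1}$ to get the closed forms $H_{-2}(\varphi_{-2,1})=-\frac12\varphi_{0,1}$ and $H_0(\varphi_{0,1})=-\frac52E_4\varphi_{-2,1}$. It then applies the product rule \eqref{H{k1+k2}} with $\Bbb D_4(E_4)=-4E_6$ to obtain $H_0^{[2]}(\varphi_{0,1})=10E_6\varphi_{-2,1}+\frac54E_4\varphi_{0,1}$. After that the identity is an exact algebraic cancellation. The case of $\varphi_{0,\frac52}$ is the same, using $H_0(\varphi_{0,\frac52})=-\frac{11}{5}E_4\varphi_{-2,1}\varphi_{0,\frac32}$ and $H_{-2}(\varphi_{-2,1}\varphi_{0,\frac32})=-\frac15\varphi_{0,1}\varphi_{0,\frac32}$.

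You instead make a single $q^0$ comparison in the two-dimensional target space $J^w_{8,1}$ (resp.\ $\varphi_{0,\frac32}J^w_{8,1}$).
\begin{itemize}
\item Your route needs neither the product rule nor the derivatives of Eisenstein series. As a bonus, your two linear conditions on the $q^0$ coefficients determine $(\mu,\lambda)=(4,-\frac54)$, resp.\ $(4,-\frac{11}{25})$, uniquely. That is how one would discover the equation in the first place.
\item The paper's route recalls the formulae (product rule, $\Bbb D_k$ of Eisenstein series) that drive the later reductions. It also shows structurally why the coefficient of $E_6H_0$ is $4$: it is exactly what cancels the term produced by $\Bbb D_4(E_4)=-4E_6$. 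This is the mechanism behind \eqref{Emu4}.
\end{itemize}

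One phrasing should be fixed. The sentence ``a weak Jacobi form of index $1$ is determined by its $q^0$ coefficient'' is false without fixing the weight; for instance $\Delta\varphi_{0,1}\neq 0$ has vanishing $q^0$ term. It does hold in weight $8$, because $J^w_{-4,1}=M_{-4}\varphi_{0,1}\oplus M_{-2}\varphi_{-2,1}=0$. Equivalently, it holds because the basis $E_4^2\varphi_{0,1}$, $E_4E_6\varphi_{-2,1}$ has linearly independent $q^0$ terms, which is the argument you ultimately give. The garbled sentence about ``$c\,\varphi_{-2,1}E_6/E_4\cdot(\dots)$'' should be replaced by that explicit statement. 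For index $\frac52$, your reduction to $J^w_{8,1}$ is fine: multiplication by the $q^0$ term $\zeta^{1/2}+\zeta^{-1/2}$ of $\varphi_{0,\frac32}$ is injective on Laurent polynomials.
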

\begin{proof} We include a proof to recall the basic formulae from \cite{AG22}. By \eqref{Hk-act}, the action of the modular differential operator 
$H_k$ on $q^0$-term  of the Fourier expansion of a Jacobi form, $q^0(\varphi_{k,m})=\sum_{l}\,a(0,l)\zeta^l$, is particularly simple:
\begin{equation}\label{Hkq0}
q^0(H_k(\varphi_{k,m}))= 
\sum_{l}\,-\bigl(\frac{3l^2}m+\frac{2k-1}2\bigr)a(0,l)\zeta^l.
\end{equation}
Since the spaces $J_{-2,1}^w$, $J_{0,1}^w$, and $J_{2,1}^w$ are one-dimensional, comparison of the $q^0$-terms gives
\begin{equation*}\label{Hphi}
H_{-2}(\varphi_{-2,1})=-\frac 12\varphi_{0,1},\quad
H_{0}(\varphi_{0,1})=-\frac 52 E_4\varphi_{-2,1}.
\end{equation*}
We also have the product rule (see \cite[Proposition 2.2]{AG22})
\begin{equation}\label{H{k1+k2}}
H_{k_1+k_2}(f_{k_1}\varphi_{k_2,m})=
12D_{k_1}(f_{k_1})\varphi_{k_2,m}
+f_{k_1}H_{k_2}(\varphi_{k_2,m})
\end{equation}
for $f_{k_1}(\tau)\in M_{k_1}(SL_2(\Bbb Z))$ and
$\varphi_{k_2,m}(\tau,z)\in J_{k_2,m}$. 
Setting $\Bbb D_k=12D_k$, we obtain
\begin{equation}\label{DkE}
\Bbb D_4(E_4)=-4E_6,\quad
\Bbb D_6(E_6)=-6E_4^2,\quad
\Bbb D_8(E_4^2)=-8E_4E_6.
\end{equation}
Consequently,
\begin{equation*}\label{deq-2:phi1}
H_0^{[2]}(\varphi_{0,1})=-\frac{5}2H_2(E_4\varphi_{-2,1})=
10E_6\varphi_{-2,1}+\frac 54 E_4\varphi_{0,1}.
\end{equation*}
Similarly, the differential equation for $\varphi_{0,\frac{5}{2}}$ follows from the identities
\begin{equation*}\label{H0phi52}
H_0(\varphi_{0,\frac 52})=-\frac{11}5 E_4 \varphi_{-2,1}\varphi_{0,\frac 32},\quad
H_{-2}(\varphi_{-2,1}\varphi_{0,\frac 32})=-\frac{1}5\varphi_{0,1}\varphi_{0,\frac 32}.
\end{equation*}
 \end{proof}
In \cite[Corollary 5.4]{AG25}, we derived a recurrence relation for the Fourier coefficients of $\varphi_{0,1}$ using a third-order MDE. The second-order MMDE in Proposition \ref{Eq-phi01} yields a new recurrence relation.
\begin{corollary} Let 
$$
\varphi_{0,1}(\tau,z)=
\sum_{\substack{ n\geq 0,\,l\in \Bbb Z}}a(n,l)q^n\zeta^l=
\zeta^{\pm 1}+10+ q(10\zeta^{\pm 2} - 64\zeta^{\pm 1}+108)+O(q^2).
$$
Then the Fourier coefficients $a(n, l)$ satisfies the following recurrence relation:
\begin{multline*} 
(4n-l^2)((4n-l^2)+1)a(n,l)=\\
4\sum_{s=1}^n\Biggl(-60(4(n-s)-l^2)^2\sigma_3(s)+2(4(n-s)-l^2)C_1(s)+C_2(s)\Biggr)a(n-s,l)
\end{multline*}
where
$$C_1(s)=-\sigma_1(s)+10\sigma_3(s)+84\sigma_5(s)-240\sum_{t=1}^{s-1}\sigma_1(t)\sigma_3(s-t),$$
\begin{multline*}
C_2(s)=\sigma_1(s)+25\sigma_3(s)+28\sigma_5(s)+4\sum_{t=1}^{s-1}\sigma_1(t)\sigma_1(s-t)
-80\sum_{t=1}^{s-1}\sigma_1(t)\sigma_3(s-t)\\
-672\sum_{t=1}^{s-1}\sigma_1(t)\sigma_5(s-t)
+2800\sum_{t=1}^{s-1}\sigma_3(t)\sigma_3(s-t)
+960\sum_{\substack{r,t,u\ge1\\r+t+u=s}}
\sigma_1(r)\sigma_1(t)\sigma_3(u).
\end{multline*}
\end{corollary}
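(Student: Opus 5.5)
We start from the second-order MMDE of Proposition \ref{Eq-phi01},
$$
E_4H_0^{[2]}(\varphi_{0,1})+4E_6H_0(\varphi_{0,1})-\tfrac54E_4^2\varphi_{0,1}=0,
$$
and read off the coefficient of $q^n\zeta^l$ on the left-hand side. Applying \eqref{Hk-act} with $m=1$ to $\varphi_{0,1}$ gives
$$
H_0(\varphi_{0,1})=\sum_{n,l}\Bigl[\bigl(3(4n-l^2)+\tfrac12\bigr)a(n,l)-12\sum_{s\ge1}\sigma_1(s)a(n-s,l)\Bigr]q^n\zeta^l .
$$
Call these coefficients $b(n,l)$. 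Applying \eqref{Hk-act} again with $k=2$ gives the coefficients of $H_0^{[2]}(\varphi_{0,1})=H_2(H_0\varphi_{0,1})$:
$$
c(n,l)=\bigl(3(4n-l^2)-\tfrac32\bigr)b(n,l)+36\sum_{s\ge1}\sigma_1(s)b(n-s,l).
$$
Writing $N=4n-l^2$ and substituting $b$ into $c$ makes $c(n,l)$ explicit. Its leading term is $(3N-\tfrac32)(3N+\tfrac12)a(n,l)$. The shifted terms involve $\sigma_1(s)$, linear in $N-4s$ or $N$, together with a convolution $\sigma_1*\sigma_1$. It is convenient to use $4(n-s)-l^2=N-4s$ as the natural variable attached to $a(n-s,l)$.

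Next multiply by $E_4=1+240\sum\sigma_3(s)q^s$, $E_6=1-504\sum\sigma_5(s)q^s$ and $E_4^2=1+480\sum\sigma_3(s)q^s+240^2\sum(\sigma_3*\sigma_3)(s)q^s$. Each product is a Cauchy product in $q$ acting diagonally in $l$, so every term reduces to a combination of $a(n-s,l)$ with coefficients polynomial in $N_s:=4(n-s)-l^2$ and in divisor sums. The total coefficient of $a(n,l)$ itself (the $s=0$ part) is
$$
(3N-\tfrac32)(3N+\tfrac12)+4(3N+\tfrac12)-\tfrac54=9N^2+9N=9N(N+1).
$$
This matches the left side $(4n-l^2)((4n-l^2)+1)a(n,l)$ after dividing by $9$. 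This check confirms the normalization and accounts for the overall factor $4$ on the right after rescaling.

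All remaining terms are moved to the right and grouped by $s$ as a quadratic polynomial in $N_s$ times $a(n-s,l)$. The $N_s^2$ coefficient comes only from $E_4\cdot(\text{leading part of }H_0^{[2]})$. It gives $240\sigma_3(s)\cdot 9N_s^2$, which after the sign change and division by $9$ yields $-240\sigma_3(s)N_s^2=4\cdot(-60)\sigma_3(s)N_s^2$, as claimed. The coefficient of $N_s$ collects $\sigma_3,\sigma_5,\sigma_1$ and $\sigma_1*\sigma_3$ contributions, producing $8C_1(s)$ after normalization. The constant term collects everything else, including $\sigma_1*\sigma_1$ from the double application of $E_2$, $\sigma_1*\sigma_5$ from $E_6H_0$, $\sigma_3*\sigma_3$ from $E_4^2$, and the triple convolution $\sigma_1*\sigma_1*\sigma_3$ from $E_4H_0^{[2]}$; this gives $4C_2(s)$.

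The main obstacle is bookkeeping rather than any conceptual step. One subtlety is that the linear terms in the first expansion appear with $N-4t$ for an inner shift $t$ inside a convolution, and these must be re-expressed via $N_s$ plus multiples of $t$. Terms such as $\sum t\,\sigma_1(t)\sigma_3(s-t)$ may then appear, and they have to cancel or be absorbed. I would first verify that they do by checking the case $n=1$ (and $n=2$) against the known coefficients $10\zeta^{\pm2}-64\zeta^{\pm1}+108$. If stray terms remain, I would rewrite them using the classical identities coming from $\Bbb D_4(E_4)=-4E_6$, $\Bbb D_6(E_6)=-6E_4^2$ and $D E_2=(E_2^2-E_4)/12$ in \eqref{DkE}. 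These express $\sum t\,\sigma_1(t)\sigma_3(s-t)$ and similar weighted sums through unweighted convolutions and $\sigma_5$, and hence bring the coefficients into the stated forms of $C_1$ and $C_2$.
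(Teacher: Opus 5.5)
Your proposal is correct and follows the route the paper intends: the recurrence comes from reading off the $q^n\zeta^l$ coefficient of the MMDE of Proposition~\ref{Eq-phi01} via \eqref{Hk-act}, and your normalizations check out ($9N(N+1)$ for $a(n,l)$, and then $-240\sigma_3$, $8C_1$ and $4C_2$ after moving terms to the right and dividing by $9$). One correction to your hedge: the weighted terms $s\sigma_1(s)$ and $\sum_t t\,\sigma_1(t)\sigma_3(s-t)$ do not cancel, and checking $n=1,2$ cannot show that they do. They are absorbed by $12DE_2=E_2^2-E_4$ alone, equivalently by writing $H_2H_0=H^2-E_2H-\tfrac14E_2^2-\tfrac12E_4$ before extracting coefficients. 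This is exactly what produces the term $4\,\sigma_1*\sigma_1$ and the triple convolution in $C_2$, and the identities for $\Bbb D_4(E_4)$ and $\Bbb D_6(E_6)$ are not needed.
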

 
We can rewrite the equations in the proposition using the meromorphic modular form $E_6/E_4$ of weight $2$. In particular,
\begin{equation}\label{MEphi01}
\bigl(H_0^{[2]}+4\frac{E_6}{E_4}H_0-\frac{5}{4}E_4\bigr)\varphi_{0,1}=0
\end{equation}
where the quotient $E_6/E_4$ can be expressed in terms of the modular invariant
$j(\tau)={E_4^3(\tau)}/{\Delta(\tau)}=q^{-1}+744+196884q+\dots$ as
\begin{equation*}\label{E6/E4}
\frac {E_6(\tau)}{E_4(\tau)}=-\frac 1{2\pi i}\frac{j'(\tau)}{j(\tau)}=
\sum_{n\ge 0} b(n)q^n=
1-744q+159768q^2+\dots .
\end{equation*}
Equation \eqref{MEphi01} yields a recurrence relation for the Fourier coefficients of $\varphi_{0,1}$. However, this relation is more complicated than the corresponding recurrence for solutions of the Kaneko--Zagier equation  
$
H_k^{[2]}(\varphi_{k,m})=\lambda E_4 \varphi_{k,m}
$
given in \cite[Proposition 3.2]{AG23}.

The MMDE \eqref{MEphi01} can be compared with the modular differential equations for two-character CFTs with $\ell=2$ in \cite[(4.4)]{HM}, where the parameter $\ell$ characterizes the number of zeros of the Wronskian of the solutions.

\subsection{Further examples of MMDEs of order 2}

Certain reflective Jacobi forms of weight $0$ serve as input for the construction of Lorentzian Kac--Moody algebras of Borcherds type (see \cite{GN98}).
The nearly holomorphic Jacobi form 
$$
\psi_{0,1}^{(-1)}=q^{-1}+(\zeta^{\pm 2}+70)+q(70\zeta^{\pm 2}+32384\zeta^{\pm 1}+131976)+O(q^2)
$$
determines the simplest Lorentzian Kac--Moody algebra of rank 3.
The corresponding Borcherds product is the Igusa modular form of weight $35$. In \cite{AG22}, we proved that 
\begin{equation*}\label{eq5}
\bigl(H_0^{[3]}-\frac{533}{4}E_4H_0+874E_6\bigr)\psi_{0,1}^{(-1)}=0.
\end{equation*}
A straightforward calculation gives the second-order MMDE
\begin{equation*}\label{eq-psi}
\bigl(E_4H_0^{[2]}+4E_6H_0-\frac{437}{4}E_4^2\bigr)\psi_{0,1}^{(-1)}=0.
\end{equation*}
The Eisenstein--Jacobi series $E_{4,1}(\tau,z)$ of weight $4$ and index $1$ satisfies the second-order MDE
$$
\bigl(H_4^{[2]}-\frac{77}4 E_4\bigr)E_{4,1}=0,
$$
which is an equation of Kaneko--Zagier type. By contrast, the minimal order of a monic MDE satisfied by $E_{6,1}(\tau,z)$ is 3
(see \cite{AG22})
\begin{equation*}\label{eq7}
\bigl(H_6^{[3]}-\frac{173}{4}E_4H_6+154E_6\bigr)E_{6,1}=0.
\end{equation*}
The order can be reduced by passing to an MMDE:
\begin{equation*}\label{eq-E61}
\bigl(E_4H_6^{[2]}+4E_6H_6-\frac{77}{4}E_4^2\bigr)E_{6,1}=0.
\end{equation*}
The three additional generators of the graded ring of Jacobi forms, defined after \eqref{J2*}, lie in the kernels of the corresponding modular differential operators. We conclude this section with second-order MMDEs for two related functions:
$$\varphi_{-1,\frac{3}{2}}=\varphi_{-1,\frac{1}{2}}\varphi_{0,1}=\zeta^{\pm \frac{3}{2}}+8\zeta^{\pm \frac{1}{2}}+q(9\zeta^{\pm \frac{5}{2}}-81\zeta^{\pm \frac{3}{2}}+198\zeta^{\pm \frac{1}{2}})+O(q^2),$$
and
\begin{multline*}\varphi_{-1,3}=\varphi_{-1,\frac{3}{2}}\varphi_{0,\frac{3}{2}}=\varphi_{-1,2}\varphi_{0,1}=\varphi_{-1,\frac{1}{2}}\varphi_{0,\frac{3}{2}}\varphi_{0,1}=\\
\zeta^{\pm 2}+10\zeta^{\pm 1}+q(-\zeta^{\pm 4}-62\zeta^{\pm 2}+128\zeta^{\pm 1})+O(q^2).
\end{multline*}
These forms satisfy the third-order MDEs
\begin{equation*}\label{eq32}
(H_{-1}^{[3]}-27E_4H_{-1}+24E_6)\varphi_{-1,\frac{3}{2}}=0,\quad
\bigl(H_{-1}^{[3]}-\frac{101}{4}E_4H_{-1}+10E_6\bigr)\varphi_{-1,3}=0.
\end{equation*}
Both forms also satisfy second-order MMDEs:
\begin{equation*}\label{eq-phi13/2}
\bigl(E_4H_{-1}^{[2]}+4E_6H_{-1}-3E_4^2\bigr)\varphi_{-1,\frac{3}{2}}=0,
\quad \bigl(E_4H_{-1}^{[2]}+4E_6H_{-1}-\frac{5}{4}E_4^2\bigr)\varphi_{-1,3}=0.
\end{equation*}
Note that the coefficients of the differential equations for $\varphi_{-1,3}$ coincide with those of the corresponding equations for $\varphi_{0,1}$.

In Theorem 4.2, we explain why all six second-order MMDEs in this section have the same coefficient $4$ multiplying the first-order term $E_6H_k$.

\section{First-order MMDEs}

In this section, we consider the simplest first-order MMDEs with leading coefficient $E_4$ or $E_6$.

\begin{theorem}\label{Ind1}
Let $\varphi_{k,m}$ be a nonzero weak Jacobi form of weight $k\in \Bbb Z$ and index $m\in \Bbb Z/2$.

1) If $\varphi_{k,m}$ satisfies 
$\bigl(E_4H_{k}+\lambda E_6\bigr)\varphi_{k,m}=0$,
then $\lambda=4b$ for some $b\in\Bbb Z_{\ge 0}$ and, up to multiplication by a constant,
$$
\varphi_{k,m}=\begin{cases}
\varphi_{0,\frac 32}(\tau, az)E_4^b\,\Delta^c, 
\  m=\frac 32 a^2,
\\
\varphi_{-1,\frac 12}(\tau, az)E_4^b\,\Delta^c,\ m=\frac 12 a^2.
\end{cases}
$$

2) If $\varphi_{k,m}$ satisfies
$\bigl(E_6H_{k}+\lambda E_4^2\bigr)\varphi_{k,m}=0$,
then $\lambda=6b$ for some $b\in\Bbb Z_{\ge 0}$ and, up to multiplication by a constant,
$$
\varphi_{k,m}=\begin{cases}
\varphi_{0,\frac{3}{2}}(\tau,az)E_6^b\Delta^c,
& m=\frac{3}{2}a^2,\\
\varphi_{-1,\frac{1}{2}}(\tau,az)E_6^b\Delta^c,
& m=\frac{1}{2}a^2,
\end{cases}
$$
where $a\in\Bbb Z_{>0}$ and $c\in\Bbb Z_{\ge 0}$.
\end{theorem}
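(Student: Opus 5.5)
The plan is to use the zero of $E_4$ at $\rho=e^{2\pi i/3}$ (resp.\ of $E_6$ at $i$) to peel off Eisenstein factors one at a time. This reduces the statement to the case $\lambda=0$, that is, to the classification of the kernel of $H_k$, which is then handled by an $\eta$-twist that turns $H_k$ into the pure heat operator. I describe part 1); part 2) is identical with $E_4,\rho,4$ replaced by $E_6,i,6$, using $\Bbb D_6(E_6)=-6E_4^2$ from \eqref{DkE}.

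\emph{Step 1 (descent).} Suppose $(E_4H_k+\lambda E_6)\varphi_{k,m}=0$ with $\lambda\neq 0$. Evaluating at $\tau=\rho$ gives $\lambda E_6(\rho)\varphi_{k,m}(\rho,z)=0$ for all $z$. Since $E_4(\rho)=0$, $E_6(\rho)\ne0$, and $E_4$ has a simple zero at $\rho$ (and at its $SL_2(\Bbb Z)$-images only), this forces $\varphi_{k,m}(\rho,z)\equiv0$. Hence $\psi=\varphi_{k,m}/E_4$ is holomorphic on $\Bbb H\times\Bbb C$. It has the transformation laws of weight $k-4$ and index $m$, and it has a Fourier expansion of weak type because $1/E_4=1+O(q)$. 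So $\psi\in J^w_{k-4,m}$. By the product rule \eqref{H{k1+k2}} and $\Bbb D_4(E_4)=-4E_6$,
$$
0=E_4H_k(E_4\psi)+\lambda E_6E_4\psi=E_4\bigl(E_4H_{k-4}\psi+(\lambda-4)E_6\psi\bigr).
$$
Thus $\psi$ satisfies the same equation with $\lambda$ replaced by $\lambda-4$. Iterating, the weight drops by $4$ at each step. Since $J^w_{k,m}=0$ for $k$ sufficiently negative (for fixed $m$, by the structure \eqref{J2*} and the relations for odd weight and half-integral index), the process must stop. At the stopping point the value at $\rho$ is not identically zero, so the current $\lambda$ must vanish. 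Hence $\lambda=4b$ with $b\in\Bbb Z_{\ge0}$ and $\varphi_{k,m}=E_4^b\varphi_0$ with $H_{k-4b}\varphi_0=0$. As a consistency check, the $q^0$-formula \eqref{Hkq0} shows that $\lambda$ equals $3l^2/m+(2k-1)/2$ for the surviving $l$.

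\emph{Step 2 (kernel of $H_k$).} Since $12D(\eta^{s})=\frac{s}{2}E_2\eta^{s}$, we get $H_k(\varphi)=\eta^{2k-1}H(\varphi\,\eta^{-(2k-1)})$. Hence $H_k\varphi=0$ if and only if $\psi=\eta^{1-2k}\varphi$ solves the heat equation $8\pi i m\,\partial_\tau\psi=\partial_z^2\psi$. The function $\psi$ is a holomorphic Jacobi-type form of weight $\tfrac12$ and index $m$ with a multiplier, so its Fourier expansion is $\sum_l c(l)q^{l^2/4m}\zeta^l$. The elliptic transformation law then makes $c(l)$ depend only on $l \bmod 2m$, up to sign, so $\psi$ is a finite combination of the theta series $\theta_{m,\mu}$. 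Moreover, its $(\theta_{m,\mu})$-vector must be a vector-valued modular form of weight $0$ for the (dual) Weil representation, twisted by a character of $SL_2(\Bbb Z)$ coming from $\eta$. Such vectors are invariant vectors. By Skoruppa's classification of weight $\tfrac12$ Jacobi forms (theta series attached to ``singular'' invariants), the only possibilities are $\vartheta(\tau,az)$ and the quintuple-product series $\eta(\tau)\vartheta(\tau,2az)/\vartheta(\tau,az)$, of index $\tfrac12a^2$ and $\tfrac32a^2$ respectively.

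Multiplying back by $\eta^{2k-1}$ and using holomorphy at the cusp (the power of $\eta$ must be compatible, giving $\eta^{24c}=\Delta^c$), we obtain exactly $\varphi_{-1,\frac12}(\tau,az)\Delta^c$ and $\varphi_{0,\frac32}(\tau,az)\Delta^c$. Combining with Step 1 gives the stated forms. Conversely, these forms do satisfy the equation: the product rule shows $E_4^b$ contributes exactly $-4bE_6$, and the base forms lie in $\ker H_k$ as noted after \eqref{J2*}.

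I expect Step 2 to be the main obstacle: pinning down that the heat-equation solutions with the required modular and elliptic behaviour are only these two theta families. The descent in Step 1 is routine. For Step 2, I would first try to avoid Skoruppa by a direct argument. The $q^0$-term, via \eqref{Hkq0}, is supported on a single $\pm l_0$. The recursion \eqref{Hk-act} then determines all coefficients from $a(0,\pm l_0)$, so $\ker H_k\cap J^w_{k,m}$ is at most one-dimensional for each admissible $l_0$. It would then remain to show that $l_0$ and $m$ force $m=\tfrac12a^2$ or $\tfrac32a^2$, matching the explicit examples.
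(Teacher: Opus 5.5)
Your argument is correct and is essentially the paper's: peel off $E_4$ (resp.\ $E_6$) using its zero at $\rho$ (resp.\ $i$), shift $\lambda\mapsto\lambda-4$ (resp.\ $\lambda-6$) by the product rule \eqref{H{k1+k2}}, and stop by the lower bound on weights in fixed index, landing in the kernel described by \eqref{KerH}. The only differences are these: you prove divisibility analytically (vanishing on the orbit of $\rho$, a simple zero, and $1/E_4=1+O(q)$) where the paper uses the polynomial structure of $J^w_{2*,*}$, and you re-derive \eqref{KerH} via the $\eta$-twist to the heat equation plus Skoruppa's classification where the paper simply quotes \cite[Theorem 2.2]{AG23}; your Skoruppa-free fallback, however, is incomplete as stated, since the factor $\frac3m(4nm-l^2)-\frac{2k-1}2$ in \eqref{Hk-act} can vanish for $n\ge 1$ (e.g.\ at $(n,l)=(1,\pm\frac52)$ for $\varphi_{0,\frac32}$), so the recursion alone does not recover the form from its $q^0$-term.
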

\begin{proof}
In \cite[Theorem 2.2]{AG23} we proved that the kernel of the modular differential operator $H_k$ is generated by 
the holomorphic Jacobi forms of weight  $1/2$. 
More precisely, 
\begin{align}\label{KerH}
\hbox{Ker\,} (H_k)
=\begin{cases}
\Bbb C\varphi_{0,\frac 32}(\tau, az)\Delta(\tau)^c,\quad
\ k=12c\ge 0,\  a\in \Bbb N,\\
\Bbb C\varphi_{-1,\frac 12}(\tau, az)\Delta(\tau)^c,\quad
k=12c-1,
\end{cases}
\end{align}
where $a\in\Bbb Z_{>0}$, $c\in\Bbb Z_{\ge 0}$, and $\Delta(\tau)\in S_{12}(SL_2(\Bbb Z))$ is the Ramanujan delta function. Recall that $D_{12}(\Delta)=0$ and that $H_k$ is $\Delta$-homogeneous (see \eqref{H{k1+k2}}).

If $\lambda=0$, the assertion of the theorem is equivalent to \eqref{KerH}. We therefore assume that $\lambda\ne 0$.

Recall that the divisors of $E_4(\tau)$  and  $E_6(\tau)$ are
$\tau=e^{2\pi i/3}$ and  $\tau=i$, with multiplicities $1/3$ and $1/2$, respectively. Then  
\begin{align}\label{multE4}
\left(E_4H_{k}+\lambda E_6\right)\varphi_{k,m}=0\quad {\rm implies}
\quad 
\varphi_{k,m}=E_4\psi_{k-4,m}.
\end{align}
If $k$ is even and $m$ is integral this follows directly from the fact
that $E_4$, $E_6$, $\varphi_{0,1}$ and $\varphi_{-2,1}$ freely generate the bigraded ring of weak Jacobi forms (see  \eqref{J2*}). The squares of three other generators mentioned after
\eqref{J2*}  belong to $J_{2*,*}^w$. Thus, the same conclusion holds when $k$ is odd or $m$ is half-integral.

Substituting $\varphi_{k,m}=E_4\psi_{k-4,m}$  into the equation, applying \eqref{H{k1+k2}}, and cancelling the factor $E_4$, we obtain
$$\bigl(E_4H_{k-4}+(\lambda-4)E_6\bigr)\psi_{k-4,m}=0.$$
If $\lambda=4$, then \eqref{KerH} implies that, up to multiplication by a constant,
$\psi_{k-4,m}=\Delta^c\varphi_{0,\frac 32}(\tau, az)$ or 
$\Delta^c\varphi_{-1,\frac 12}(\tau, az)$.
If $\lambda\neq 4$, we repeat this procedure. After $b$ iterations, we obtain a form $\psi_{k-4b, m}$ satisfying
$$
\bigl(E_4H_{k-4b}+(\lambda-4b)E_6\bigr)\psi_{k-4b,m}=0.
$$
By the structure theorem \eqref{J2*}, the weights of nonzero weak Jacobi forms of fixed index are bounded below. Thus, the process must terminate with $\lambda=4b$; otherwise, for sufficiently large $b$, we would have $\psi_{k-4b,m}=0$, contradicting the assumption that $\varphi_{k,m}\ne 0$.

The proof of the second assertion, concerning
$\bigl(E_6H_k+\lambda E_4^2\bigr)\varphi_{k,m}=0$,
is completely analogous.
\end{proof}

Note that every solution of the MMDEs in Theorem \ref{Ind1} has the split form $g(\tau)\varphi_{0,\frac{3}{2}}(\tau,az)$ or $g(\tau)\varphi_{-1,\frac{1}{2}}(\tau,az)$, where the modular form $g(\tau)$ satisfies a corresponding differential equation with $H_k$ replaced by $D_k$.

\begin{proposition}\label{Hkphi} Consider the equations
$$
H_{k-2}(\varphi_{k-2,m})=f_k(\tau)\varphi_{0,\frac 32}(az),\quad
H_{k-3}(\varphi_{k-3,m})=f_k(\tau)\varphi_{-1,\frac 12}(az),
$$
where $m=\frac{3a^2}{2}$ and $m=\frac{a^2}{2}$, respectively, and
$f_k(\tau)=1+O(q)\in M_k(SL_2(\Bbb Z))$.
Every weak Jacobi form satisfying either equation, if such a solution exists, has split form:
$\varphi_{k-2,m}=g_{k-2}(\tau)\varphi_{0,\frac 32}(az)$ or 
$\varphi_{k-3,m}=g_{k-2}(\tau)\varphi_{-1,\frac 12}(az)$.
\end{proposition}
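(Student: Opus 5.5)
The plan is to reduce to the kernel description \eqref{KerH}, using the fact that a split form can be written down explicitly. Consider the first equation, with $m=\frac{3a^2}{2}$. The form $\varphi_{0,\frac32}(\tau,az)$ lies in $\operatorname{Ker} H_0$ by \eqref{KerH}. The product rule \eqref{H{k1+k2}} then gives
$$
H_{k-2}\bigl(g_{k-2}\,\varphi_{0,\frac32}(az)\bigr)=12D_{k-2}(g_{k-2})\,\varphi_{0,\frac32}(az).
$$
So I first look for a modular form $g_{k-2}\in M_{k-2}(SL_2(\Bbb Z))$ with $12D_{k-2}(g_{k-2})=f_k$. If such a $g$ exists, the split form is a particular solution.

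Next, take an arbitrary solution $\varphi_{k-2,m}$ and let $\psi=\varphi_{k-2,m}-g_{k-2}\varphi_{0,\frac32}(az)$. Then $H_{k-2}(\psi)=0$. By \eqref{KerH}, $\psi$ is a multiple of $\Delta^c\varphi_{0,\frac32}(\tau,a'z)$ with $k-2=12c$ and $\frac{3a'^2}{2}=m$. Hence $a'=a$ and $\psi=C\Delta^c\varphi_{0,\frac32}(az)$. Since $\Delta^c$ is itself a modular form of weight $k-2$, absorbing it into $g$ (replacing $g$ by $g+C\Delta^c$) shows that $\varphi_{k-2,m}$ has split form. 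The second equation, with $\varphi_{-1,\frac12}(az)$, $m=\frac{a^2}{2}$, weight $k-3$ and $\varphi_{-1,\frac12}(az)\in\operatorname{Ker}H_{-1}$, is handled identically.

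The main obstacle is existence: the equation $12D_{k-2}g=f_k$ may have no modular solution, while a Jacobi solution might still exist. I will therefore argue directly instead of assuming a particular solution. Multiply $\varphi_{k-2,m}$ by a suitable power of $\Delta$ and divide by $\varphi_{0,\frac32}(az)$; the quotient $h=\varphi_{k-2,m}/\varphi_{0,\frac32}(az)$ is a meromorphic Jacobi form of weight $k-2$ and index $0$. The product rule (valid also for meromorphic index-$0$ factors, since $H^{(m)}$ is a second-order operator whose cross terms involve $\partial_z h$) gives
$$
H_{k-2}(h\,\varphi_{0,\frac32}(az))=\bigl(12D_{k-2}h-\tfrac{6}{m}\,\partial\text{-cross term}\bigr)\varphi_{0,\frac32}(az)+h\cdot 0,
$$
schematically. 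Comparing with $f_k\varphi_{0,\frac32}(az)$ yields an equation of heat type for $h$ in which the $z$-dependence must vanish. The fallback I would try first is to show $\partial_z h=0$ by looking at the lowest $q$-power using \eqref{Hkq0}: the $q^0$-term of $H_{k-2}\varphi$ must be $\zeta$-proportional to that of $\varphi_{0,\frac32}(az)$, and the eigenvalues $-\bigl(\frac{3l^2}{m}+\frac{2k-5}{2}\bigr)$ separate different $|l|$. An induction on the power of $q$ then forces each Fourier coefficient of $\varphi$ to be proportional to the corresponding coefficient of $\varphi_{0,\frac32}(az)$, which is exactly the split form. The delicate point, which I expect to be the hardest, is controlling this induction, since $\sigma_1$ convolution terms from \eqref{Hk-act} mix lower $q$-orders.
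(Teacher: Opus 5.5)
Your first two paragraphs are correct and match the opening step of the paper's proof. Two solutions differ by an element of $\operatorname{Ker}H_{k-2}=\Bbb C\,\Delta^c\varphi_{0,\frac32}(az)$, so once one split solution exists, every solution is split.

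\textbf{The gap.} You flag the remaining case but do not close it. If $f_k\notin\Bbb D_{k-2}(M_{k-2})$, you must prove that no Jacobi solution exists at all. This case does occur, since $\Bbb D_{10}\colon M_{10}\to M_{12}$ is not onto. Your fallback does not handle it:
\begin{itemize}
\item The meromorphic-quotient computation is only schematic.
\item Split form does not mean that the $q^n$-coefficient of $\varphi_{k-2,m}$ is proportional to the $q^n$-coefficient of $\varphi_{0,\frac32}(az)$. The $q^n$-term of $g\,\varphi_{0,\frac32}(az)$ mixes all lower orders.
\item A coefficient recursion from \eqref{Hk-act} leaves coefficients undetermined wherever $\frac3m(4nm-l^2)-\frac{2k-5}2=0$. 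For instance this happens at $n=c$, $l=\pm\frac a2$ when $k-2=12c$, which is exactly where the kernel enters.
\item More importantly, the obstruction that rules out solutions is global. It comes from the structure of $J^w_{*,m}$ (divisibility by $\varphi_{-2,1}$), which an order-by-order comparison never uses.
\end{itemize}

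\textbf{The missing idea} is to lower the weight by $12$ using only $q^0$-terms, then use base cases.
\begin{enumerate}
\item For $k>2$ all multipliers in \eqref{Hkq0} are nonzero, so $q^0[\varphi_{k-2,m}]=-(k-2)^{-1}(\zeta^{\frac a2}+\zeta^{-\frac a2})$.
\item Take any $g_{k-2}\in M_{k-2}$ with constant term $-(k-2)^{-1}$. You need $\Bbb D_{k-2}(g_{k-2})$ to agree with $f_k$ only up to a cusp form.
\item Then $\varphi_{k-2,m}-g_{k-2}\varphi_{0,\frac32}(az)=\Delta\psi_{k-14,m}$ and $f_k-\Bbb D_{k-2}(g_{k-2})=\Delta f_{k-12}$.
\item By $\Delta$-homogeneity, $H_{k-14}(\psi_{k-14,m})=f_{k-12}\,\varphi_{0,\frac32}(az)$. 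This is an equation of the same type after rescaling $f_{k-12}$, or after dividing by a further power of $\Delta$ if its constant term vanishes.
\end{enumerate}
The iteration terminates in the following base cases.
\begin{itemize}
\item For $k=6,8,10$ there are explicit split solutions from \eqref{DkE}, namely $g=-\frac14E_4$, $-\frac16E_6$, $-\frac18E_4^2$.
\item For $k=0$ and $k=4$ the multipliers are again nonzero (for $k=0$ the multiplier is $\frac52-\frac{2l^2}{a^2}$). This forces a $q^0$-term proportional to $\zeta^{\frac a2}+\zeta^{-\frac a2}$, which does not vanish at $\zeta=1$. But every weak Jacobi form of weight $-2$ or $2$ is divisible by $\varphi_{-2,1}$ and vanishes to order $\ge 2$ at $z=0$. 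So there is no solution.
\item If the reduced right-hand side is $0$, the reduced form lies in the kernel.
\end{itemize}
Tracing back, either a split solution exists or no solution exists at all, and your kernel argument then completes the proof. The $\varphi_{-1,\frac12}(az)$ case is analogous. There, forms of weight $-3$ or $1$ vanish to order $\ge 3$ at $z=0$, while the forced $q^0$-term $\zeta^{\frac a2}-\zeta^{-\frac a2}$ vanishes only to order $1$.
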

\begin{proof}
We prove the assertion for the first equation when $k$ is even.
If $\varphi_{k-2,m}$ and $\psi_{k-2,m}$
are two solutions, then 
\begin{align}
\varphi_{k-2,m}-\psi_{k-2,m}\in 
\hbox{Ker\,}(H_{k-2})=\Bbb C\varphi_{0,\frac 32}(az)\,\Delta^c 
\end{align}
where $\varphi_{0,\frac 32}(az)=\zeta^{\frac a2}+\zeta^{-\frac a2}+O(q)$.
Thus, if one solution has split form, so does every other solution.
We prove the proposition by induction on the weight, beginning with several special cases.

Let $k=0$. Then $f_k(\tau)\equiv 1$. By \eqref{Hkq0}, the multiplier of each Fourier coefficient $a(0,l)$ in $q^0[H_{-2}(\varphi_{-2,m})]$ is
$\frac{5}{2}-\frac{2l^2}{a^2}\ne 0$.
Consequently, up to a nonzero constant,
$$q^0[\varphi_{-2,m}]=\zeta^{\frac {a} 2}+ \zeta^{-\frac {a} 2}.$$
This contradicts the fact that $\varphi_{-2,m}$ vanishes to order at least $2$ at $z=0$. Hence the equation
\begin{align}\label{eq-phi32}
H_{-2}(\varphi_{-2,m})=\varphi_{0,\frac{3}{2}}(\tau,az)
\end{align}
has no solutions.

By \eqref{Hkq0}, when $k>2$, every Fourier coefficient in the $q^0$-term of $\varphi_{k-2,m}$ is multiplied by a nonzero factor under $H_{k-2}$. The case $k=4$ is therefore similar to the case $k=0$: the equation
$$
H_2(\varphi_{2,m})=E_4\varphi_{0,\frac{3}{2}}(\tau,az)
$$
has no solutions, since $\hbox{ord}_{z=0}(\varphi_{2,m})\ge 2$
by \eqref{J2*}.

For $k=6$, $8$ and $10$, the equation
$H_{k-2}(\varphi_{2,m})=E_k\varphi_{0,\frac 32}(az)$ has a split solution by \eqref{DkE}.

Let $k\ge 12$. As above, comparing the $q^0$-terms in
$$
H_{k-2}(\varphi_{k-2,m})=f_k\,\varphi_{0,\frac 32}(az), \qquad
f_k(\tau)=1+O(q),
$$
gives
$q^0[\varphi_{k-2,m}]=-(k-2)^{-1}(\zeta^{\frac a2}+\zeta^{-\frac a2})$. 
Choose a modular form $g_{k-2}$ of weight $k-2$ with constant term $-(k-2)^{-1}$. For example, we may take $g_{k-2}=-(k-2)^{-1}E_4^d$ or 
$-(k-2)^{-1}E_4^{d-1}E_6$ if $k-2=4d$ or $4d+2$, respectively.  Then 
$$
\Bbb D_{k-2}(E_4^d)=-4dE_4^{d-1}E_6,\quad
\Bbb D_{k-2}(E_4^{d-1}E_6)=-4(d-1)E_4^{d-2}E_6^2-6E_4^{d+1},
$$
and $f_k-{\Bbb D}_{k-2}(g_{k-2})=\Delta f_{k-12}$.
Similarly,
$$\varphi_{k-2,m}-g_{k-2}\varphi_{0,\frac 32}(az)=\Delta\psi_{k-14,m}$$
for some weak Jacobi form $\psi_{k-14,m}$.
By the $\Delta$-homogeneity of the modular differential operator, we obtain
$$
H_{k-14}(\psi_{k-14,m})=f_{k-12}\,\varphi_{0,\frac 32}(az).
$$
Iterating this process, dividing by powers of $\Delta$ when necessary, either produces a split solution of the original equation or shows that no solution exists. The proof for the second equation is entirely analogous.
\end{proof}

\begin{corollary}\label{KerH2} If
$H_k^{[2]}(\varphi_{k,m})=0$, then 
$\varphi_{k,m}\in \hbox{Ker\,} H_k$.
\end{corollary}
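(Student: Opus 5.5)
Suppose $H_k^{[2]}(\varphi_{k,m})=H_{k+2}(H_k(\varphi_{k,m}))=0$ and set $\psi=H_k(\varphi_{k,m})\in J^w_{k+2,m}$. I would argue by contradiction, assuming $\psi\neq 0$, so that $\psi\in\hbox{Ker\,}(H_{k+2})$. By \eqref{KerH}, up to a nonzero constant, $\psi=\Delta^c\varphi_{0,\frac32}(\tau,az)$ with $k+2=12c$, or $\psi=\Delta^c\varphi_{-1,\frac12}(\tau,az)$ with $k+2=12c-1$. The task is therefore to show that the first-order equation
$$
H_k(\varphi_{k,m})=\Delta^c\varphi_{0,\frac32}(\tau,az)\quad\text{or}\quad H_k(\varphi_{k,m})=\Delta^c\varphi_{-1,\frac12}(\tau,az)
$$
has no nonzero weak Jacobi solution.

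First I would strip off the powers of $\Delta$. Look at the $q^0$-term via \eqref{Hkq0}. If $c\ge1$, the right-hand side vanishes at $q^0$. The multipliers $-(3l^2/m+(2k-1)/2)$ are nonzero whenever $k\ge1$, and here $k=12c-2\ge 10$ (or $12c-3$), so $q^0[\varphi_{k,m}]=0$. Hence $\varphi_{k,m}=\Delta\,\varphi'$ with $\varphi'$ a weak Jacobi form, using \eqref{J2*} and the cusp-form structure of $M_*$. By $\Delta$-homogeneity (\eqref{H{k1+k2}} with $D_{12}\Delta=0$) we get $H_{k-12}(\varphi')=\Delta^{c-1}\varphi_{0,\frac32}(az)$. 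Iterating this reduces everything to $c=0$, that is, to $H_{-2}(\varphi_{-2,m})=\varphi_{0,\frac32}(az)$ or $H_{-3}(\varphi_{-3,m})=\varphi_{-1,\frac12}(az)$.

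The first of these is exactly \eqref{eq-phi32}, which Proposition \ref{Hkphi} (case $k=0$, $f_0=1$) shows has no solutions. For the second I would run the analogous $q^0$ argument. The multiplier of $a(0,l)$ is $\frac72-\frac{6l^2}{a^2}$, which can vanish only if $l^2=\frac{7a^2}{12}$, impossible for rational $l/a$. So $q^0[\varphi_{-3,m}]$ is a multiple of $\zeta^{a/2}-\zeta^{-a/2}$. This contradicts the structure $J^w_{-3,m}=\varphi_{-1,2}\,J^w_{-2,m-2}$ (or $\varphi_{-1,\frac12}J^w_{-2,m-\frac12}$), whose elements vanish to order at least $3$ at $z=0$, whereas $\zeta^{a/2}-\zeta^{-a/2}$ vanishes only to order $1$. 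Alternatively I would simply invoke the ``entirely analogous'' second case of Proposition \ref{Hkphi}.

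The step I expect to be the main obstacle is the reduction for $c\ge1$. There one must check that the $q^0$ multipliers really are nonzero for the relevant weights, and that after dividing by $\Delta$ the form $\varphi'$ is still a weak Jacobi form of the right type (including odd weight or half-integral index). This should follow from the product decompositions $J^w_{2k+1,m}=\varphi_{-1,2}J^w_{2k+2,m-2}$ etc., which reduce it to the polynomial ring \eqref{J2*}. Once $\psi=0$ is established, $\varphi_{k,m}\in\hbox{Ker\,}H_k$ as claimed.
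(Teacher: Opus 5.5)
Your proof is correct and follows the paper's argument. The paper's proof is the one-line remark that the claim follows from \eqref{KerH} and \eqref{eq-phi32}. Your $\Delta$-stripping step, via $\Delta$-homogeneity as in the proof of Proposition~\ref{Hkphi}, and your treatment of the odd case $H_{-3}(\varphi_{-3,m})=\varphi_{-1,\frac12}(\tau,az)$ make explicit what that remark leaves implicit. The step you flagged as the main obstacle is harmless. When $q^0[\varphi]=0$, the quotient $\varphi/\Delta$ is holomorphic on $\Bbb H\times\Bbb C$, has the same transformation laws, and has a Fourier expansion starting at $q^0$, so it is again a weak Jacobi form of the same type.
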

\begin{proof} This follows directly from \eqref{KerH} and \eqref{eq-phi32}.
\end{proof}

\section{Second-order MMDEs}

In this section, we investigate the simplest second-order MMDEs with leading coefficient $E_4$:
\begin{equation}\label{E4ME}
\bigl(E_4H_k^{[2]}+\mu E_6H_k+\lambda E_4^2\bigr)\varphi_{k,m}=0.
\end{equation}
When $\mu=0$, cancelling the factor $E_4$ yields an equation of Kaneko--Zagier type (see \cite{K}, \cite{AG23}):
\begin{equation}\label{KZ}
\bigl(H_k^{[2]}+\lambda E_4\bigr)\varphi_{k,m}=0.
\end{equation}
We studied this equation in \cite[\S 3]{AG23}. The first author will give a complete description of its solutions for even $k$ in forthcoming work. In this section, we show that every non-split solution of \eqref{E4ME} is generated by a non-split solution of \eqref{KZ}.

\subsection{Split solutions}

Suppose that $f_k(\tau)\in M_k(SL_2(\Bbb Z))$ satisfies
$$
\bigl(E_4\Bbb D_k^{[2]}+\mu E_6\Bbb D_k+\lambda E_4^2\bigr)f_k=0.
$$
Then $f_k(\tau)\varphi_{0,\frac{3}{2}}(\tau,az)$ and
$f_k(\tau)\varphi_{-1,\frac{1}{2}}(\tau,az)$ are split solutions of \eqref{E4ME}, with weights $k$ and $k-1$, respectively.
\smallskip

\noindent
{\bf Example.} We have
$$
\bigl(E_4H_{6}^{[2]}+\mu E_6H_6+\lambda E_4^2\bigr)
(E_6\varphi_{0,\frac 32})=
(48-6\mu+\lambda)E_4^2E_6\varphi_{0,\frac 32}.
$$
Thus  $E_6\varphi_{0,\frac 32}(az)$ is a spilt solution if $48-6\mu+\lambda=0$.
\smallskip

\begin{proposition}\label{E4Eq-lam0}
When $\lambda=0$, equation \eqref{E4ME} has only split solutions.
\end{proposition}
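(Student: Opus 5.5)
With $\lambda=0$ the equation \eqref{E4ME} reads $\bigl(E_4H_{k+2}+\mu E_6\bigr)\bigl(H_k\varphi_{k,m}\bigr)=0$. This is a first-order MMDE for $\psi_{k+2,m}:=H_k(\varphi_{k,m})$, so the plan is to apply Theorem \ref{Ind1} to $\psi$ and then Proposition \ref{Hkphi} to recover $\varphi$.

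\emph{Step 1.} If $\psi_{k+2,m}=0$, then $\varphi_{k,m}\in\hbox{Ker\,}H_k$. By \eqref{KerH}, $\varphi_{k,m}$ is then $\Delta^c\varphi_{0,\frac32}(\tau,az)$ or $\Delta^c\varphi_{-1,\frac12}(\tau,az)$ up to a constant, which is split.

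\emph{Step 2.} Otherwise part 1) of Theorem \ref{Ind1} applies to $\psi$. It gives $\mu=4b$ with $b\in\Bbb Z_{\ge0}$ and
$$
H_k(\varphi_{k,m})=C\,E_4^b\Delta^c\,\varphi_{0,\frac32}(\tau,az)
\quad\text{or}\quad
C\,E_4^b\Delta^c\,\varphi_{-1,\frac12}(\tau,az),
\qquad C\neq0.
$$
The right-hand side has the form $f(\tau)\varphi_{0,\frac32}(az)$ or $f(\tau)\varphi_{-1,\frac12}(az)$ with $f=E_4^b\Delta^c$ a modular form. However, $f$ need not be normalized $1+O(q)$ when $c>0$.

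\emph{Step 3.} I remove the $\Delta^c$ factor by $\Delta$-homogeneity. The product rule \eqref{H{k1+k2}} with $D_{12}\Delta=0$ gives $H_k(\Delta\chi)=\Delta H_{k-12}(\chi)$. I also need that $\varphi_{k,m}$ is divisible by $\Delta^c$ in the ring of weak Jacobi forms. For this I compare $q$-expansions order by order, inductively in $c$. If $\varphi_{k,m}$ has a nonzero $q^0$-term, then by \eqref{Hkq0} that term maps to a nonzero $q^0$-term of the right-hand side. The only exception is a kernel component, which I first subtract off using \eqref{KerH}.

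The cleaner route is to work with the lowest-order term in $q$. Apply $H_k$ as in \eqref{Hk-act} to the lowest $q^n$ coefficient of $\varphi_{k,m}$. This gives a factor $\frac3m(4nm-l^2)-\frac{2k-1}2$ at the order where the right-hand side first appears. This is the same mechanism used in the proof of Proposition \ref{Hkphi}, and I will mimic that proof's iteration directly. Then I apply Proposition \ref{Hkphi} with $f_{k+2}=E_4^b$ (normalized), which shows that $\varphi_{k,m}$ has split form $g(\tau)\varphi_{0,\frac32}(az)$ or $g(\tau)\varphi_{-1,\frac12}(az)$. Adding back the kernel element from \eqref{KerH} keeps the form split, since the kernel elements are themselves split with the same $a$.

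\emph{Main obstacle.} The delicate point is the $\Delta^c$ factor in Step 2. Proposition \ref{Hkphi} is stated only for $f_k=1+O(q)$, so I must justify reducing to that case. I would first try to show that the general solution of $H_k\varphi=\Delta^cE_4^b\varphi_{0,\frac32}(az)$ is $\Delta^c\chi$ plus a kernel element, with $\chi$ a solution for $E_4^b$. To do this I would subtract suitable split particular solutions obtained from $D$-equations on modular forms and use the kernel description \eqref{KerH}. Failing that, I would rerun the inductive proof of Proposition \ref{Hkphi} verbatim with an arbitrary modular $f_k$, since that proof only ever uses the constant term of $f_k$ to fix the leading term of $g$.
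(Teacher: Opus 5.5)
Your proposal is correct and follows the paper's proof. You apply Theorem \ref{Ind1} to $H_k(\varphi_{k,m})$; this also covers $\mu=0$ as the case $b=0$, for which the paper cites Corollary \ref{KerH2}. You then remove $\Delta^c$ by $\Delta$-homogeneity and a comparison of lowest $q$-terms (the paper instead normalizes $q^0[\varphi_{k,m}]\neq 0$ at the outset and deduces $c=0$), and finish with Proposition \ref{Hkphi} for $f=E_4^b$.

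The exceptional ``kernel component'' you hedge about in Step 3 never occurs, so you can drop that part. Since $H_k(\varphi_{k,m})$ has weight $4b+12c$ (resp.\ $4b+12c-1$), every intermediate form $\varphi_{k,m}/\Delta^j$ with $j<c$ has weight at least $10$ (resp.\ $9$). All multipliers in \eqref{Hkq0} are therefore nonzero, so the $q^0$-term of that form must vanish, and this is exactly what lets you divide by $\Delta$ again.
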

\begin{proof} We have 
$\bigl(E_4H_{k+2}H_k+\mu E_6H_k\bigr)\varphi_{k,m}=0$. By $\Delta$-homogeneity, we may assume that $q^0[\varphi_{k,m}]\ne 0$.

If $\mu=0$, Corollary \ref{KerH2} and \eqref{KerH} imply that $\varphi_{k,m}$ has split form. The same conclusion holds if $H_k(\varphi_{k,m})=0$. 

Now suppose that $\mu\ne 0$ and $H_k(\varphi_{k,m})\ne 0$. By Theorem \ref{Ind1}, we have $\mu=4b$ and
$$
H_k(\varphi_{k,m})=
C\cdot
\begin{cases}
\varphi_{0,\frac{3}{2}}(\tau,az)E_4^b\Delta^c,\\
\varphi_{-1,\frac{1}{2}}(\tau,az)E_4^b\Delta^c
\end{cases}
$$
for some nonzero constant $C$.

In the proof of \cite[Theorem 2.2]{AG23}, we established that if
$q^0[\varphi_{k,m}]\ne 0$ but $q^0[H_k(\varphi_{k,m})]=0$, then $\varphi_{k,m}$ is a constant multiple of either $\varphi_{0,\frac{3}{2}}(\tau,az)$ or $\varphi_{-1,\frac{1}{2}}(\tau,az)$. Both forms lie in the kernel of the corresponding operator, contradicting our assumption that $H_k(\varphi_{k,m})\ne 0$. Hence $c=0$, and Proposition \ref{Hkphi} implies that $\varphi_{k,m}$ has split form.
\end{proof} 

\subsection{Reduction of the MMDE \eqref{E4ME} to the Kaneko--Zagier equation \eqref{KZ}}

We seek non-split solutions of \eqref{E4ME}. By Proposition \ref{E4Eq-lam0}, we may assume that $\lambda\ne 0$. If $\mu=0$, the equation already reduces to \eqref{KZ}, so we also assume that $\mu\ne 0$.

By $\Delta$-homogeneity, we may suppose that $q^0[\varphi_{k,m}]\ne 0$. Moreover, $H_k(\varphi_{k,m})\ne 0$, since otherwise \eqref{E4ME} and $\lambda\ne 0$ would imply that $\varphi_{k,m}=0$.

The structure of the algebra of weak Jacobi forms and the zeros of $E_4$ and $E_6$ imply, as in \eqref{multE4}, that
\begin{equation}\label{psi-k-2}
H_k(\varphi_{k,m})=E_4\psi_{k-2,m}.
\end{equation}
Substituting this expression into \eqref{E4ME}, applying the product rule, and dividing by $E_4$, we obtain
\begin{equation}\label{phi-psi}
E_4H_{k-2}(\psi_{k-2,m})+(\mu-4)E_6\psi_{k-2,m}
+\lambda E_4\varphi_{k,m}=0.
\end{equation}
If $H_{k-2}(\psi_{k-2,m})=0$, then \eqref{KerH} and Proposition \ref{Hkphi} imply that $\varphi_{k,m}$ has split form.

1) Suppose that $\mu=4$. Dividing \eqref{phi-psi} by $E_4$ and applying $H_k$, we obtain an equation of Kaneko--Zagier type:
\begin{align}\label{Emu4}
\bigl(H_{k-2}^{[2]}+\lambda E_4\bigr)\psi_{k-2,m}=0,
\quad H_k(\varphi_{k,m})=E_4\psi_{k-2,m}
\qquad (\mu=4).
\end{align}
This explains why all the MMDEs considered in \S 2 have $\mu=4$. Moreover, if $\psi_{k-2,m}$ is a split solution, then Proposition \ref{Hkphi} implies that $\varphi_{k,m}$ also has split form.

2) Now suppose that $\mu\ne 4$. We retain the assumption $\lambda\ne 0$, since we seek non-split solutions. It follows from \eqref{phi-psi} that
$$
\psi_{k-2,m}=E_4\xi_{k-6,m},
\qquad H_k(\varphi_{k,m})=E_4^2\xi_{k-6,m}.
$$
Substituting $\psi_{k-2,m}=E_4\xi_{k-6,m}$ into \eqref{phi-psi} and cancelling the factor $E_4$, we obtain
\begin{equation*}\label{PhiXi}
E_4H_{k-6}(\xi_{k-6,m})+(\mu-8)E_6\xi_{k-6,m}
+\lambda\varphi_{k,m}=0.
\end{equation*}
If $H_{k-6}(\xi_{k-6,m})=0$, then $\varphi_{k,m}$ again has split form.

Applying $H_k$ and using $H_k(\varphi_{k,m})=E_4^2\xi_{k-6,m}$ yields a new equation for $\xi_{k-6,m}$:
\begin{equation}\label{e-xik-6}
\bigl(E_4H_{k-6}^{[2]}+(\mu-12)E_6H_{k-6}
+(48-6\mu+\lambda)E_4^2\bigr)\xi_{k-6,m}=0.
\end{equation}
If $48-6\mu+\lambda=0$, Propositions \ref{E4Eq-lam0} and \ref{Hkphi} imply that both $\xi_{k-6,m}$ and $\varphi_{k,m}$ have split form.

3) If $\mu=12$, equation \eqref{e-xik-6} reduces to an equation of Kaneko--Zagier type. If $\mu\ne 12$, we repeat the reduction.

4)If $\mu-12=4$, that is, $\mu=16$, the reduced equation becomes
$$
\bigl(H_{k-8}^{[2]}+(\lambda-48)E_4\bigr)\varepsilon_{k-8,m}=0,
$$
where
$$
H_{k-6}(\xi_{k-6,m})=E_4\varepsilon_{k-8,m},
\qquad
H_k(\varphi_{k,m})=E_4^2\xi_{k-6,m}.
$$

5) If $\mu\ne 16$, the next reduction gives
\begin{equation*}
\bigl(E_4H_{k-12}^{[2]}+(\mu-24)E_6H_{k-12}
+(168-12\mu+\lambda)E_4^2\bigr)\varepsilon_{k-12,m}=0,
\end{equation*}
with
$$
H_{k-6}(\xi_{k-6,m})=E_4^2\varepsilon_{k-12,m},
\qquad
H_k(\varphi_{k,m})=E_4^2\xi_{k-6,m}.
$$
If $\varepsilon_{k-8,m}$ or $\varepsilon_{k-12,m}$ has split form, then Proposition \ref{Hkphi} implies that both $\xi_{k-6,m}$ and $\varphi_{k,m}$ also have split form.

6) Suppose that $\mu$ is neither $12d$ nor $4+12d$ for any $d\in\Bbb Z_{\ge 0}$. Then, at every stage of the reduction, the coefficient of the first-order term $E_6H_*$ is neither $0$ nor $4$, so the reduction can be continued. Since the weights of nonzero weak Jacobi forms of fixed index are bounded below, after sufficiently many steps the resulting form $\varepsilon_{k-12t,m}$ must vanish identically. Tracing the reduction backwards and applying \eqref{KerH} and Proposition \ref{Hkphi}, we conclude that the original solution has split form.
 
We have proved the following result.
\begin{theorem}\label{Th-E4ME}
The equation
\[
\bigl(E_4H_k^{[2]}+\mu E_6H_k+\lambda E_4^2\bigr)\varphi_{k,m}=0
\]
can have a non-split solution only if $\mu=12d$ or $\mu=4+12d$ for some $d\in\Bbb Z_{\ge 0}$. Every such solution is obtained by lifting a non-split solution of a Kaneko--Zagier equation through the reduction process described above.
\end{theorem}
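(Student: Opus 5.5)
The plan is to formalize the reduction already sketched above as an induction on the weight $k$, with the first-order coefficient $\mu$ and the constant $\lambda$ carried along as parameters. First, dispose of the degenerate cases. If $\lambda=0$, Proposition \ref{E4Eq-lam0} gives that all solutions are split. If $\mu=0$, the equation is already of Kaneko--Zagier type \eqref{KZ}, which is the case $d=0$. By $\Delta$-homogeneity we normalize so that $q^0[\varphi_{k,m}]\ne0$. Then $H_k(\varphi_{k,m})\ne0$, and \eqref{psi-k-2} gives $H_k\varphi_{k,m}=E_4\psi_{k-2,m}$, which leads to \eqref{phi-psi}. If $\mu=4$, we land in \eqref{Emu4}, a Kaneko--Zagier equation for $\psi_{k-2,m}$. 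Proposition \ref{Hkphi} then shows that a split $\psi$ forces a split $\varphi$. Hence non-split $\varphi$ come from non-split $\psi$.

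The core step is a single two-stage reduction lemma. Suppose $\mu\ne 0,4$ and $\lambda\ne0$. Divisibility of \eqref{phi-psi} by $E_4$ (the zero of $E_6$ at $i$ versus the zero of $E_4$ at $\rho$, via the free generation \eqref{J2*} and the squaring trick for odd weight or half-integral index) gives $\psi_{k-2,m}=E_4\xi_{k-6,m}$. Applying $H_k$ then yields \eqref{e-xik-6}. This is an equation of the same shape $(E_4H^{[2]}+\mu'E_6H+\lambda'E_4^2)\xi=0$ with $\mu'=\mu-12$, $\lambda'=\lambda+48-6\mu$, and weight lowered by $6$. I would check this computation once, using \eqref{H{k1+k2}} and \eqref{DkE}. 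Then I would record the backward implication: if $\xi_{k-6,m}$ is split, then $H_k\varphi_{k,m}=E_4^2\xi_{k-6,m}$ is of the form $f\,\varphi_{0,\frac32}(az)$ or $f\,\varphi_{-1,\frac12}(az)$, and Proposition \ref{Hkphi} (together with \eqref{KerH} for the kernel ambiguity) makes $\varphi_{k,m}$ split. The same holds when $H_{k-6}\xi=0$, and when $\lambda'=0$ by Proposition \ref{E4Eq-lam0}.

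Induction now finishes the argument. Starting from $\mu$, the successive first-order coefficients are $\mu-12t$. The process stops with a Kaneko--Zagier equation exactly when some $\mu-12t\in\{0,4\}$, that is, when $\mu=12d$ or $\mu=4+12d$ with $d\ge0$. Otherwise every stage is of the generic type, and the weights $k-6t$ decrease without bound. Since nonzero weak Jacobi forms of fixed index have weights bounded below by \eqref{J2*}, some $\xi$ at a finite stage must vanish, or at least be split. Indeed, vanishing means $H\xi_{\text{prev}}=0$, which lies in \eqref{KerH}. Tracing back through the backward implication gives that $\varphi_{k,m}$ is split, so non-split solutions force $\mu\in 12\Bbb Z_{\ge0}\cup(4+12\Bbb Z_{\ge0})$. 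The same bookkeeping shows that a non-split $\varphi$ produces, at the terminal stage, a non-split solution of a Kaneko--Zagier equation, from which $\varphi$ is recovered by the inverse steps $H\varphi=E_4^{\ast}\cdot(\text{next form})$. This is the lifting claim.

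The main obstacle I expect is the backward step. One must make sure that Proposition \ref{Hkphi} really applies when the right-hand side is $E_4^2\xi$ with $\xi$ split but possibly carrying a factor $\Delta^c$, and that its normalization hypothesis $f_k=1+O(q)$ can be arranged. I would handle this by dividing out $\Delta^c$ using $\Delta$-homogeneity, as in the proof of Proposition \ref{E4Eq-lam0}. Where $q^0[H\varphi]=0$ occurs, I would invoke the $q^0$-argument from \cite[Theorem 2.2]{AG23} to rule out $c>0$. The divisibility claims for odd weight or half-integral index also need care, and I would reduce them to the even integral case by squaring, as in \eqref{multE4}.
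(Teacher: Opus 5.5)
Your proposal is correct and follows the paper's own reduction argument. It uses the same normalization $q^0[\varphi_{k,m}]\ne0$ and the same two-stage step $\mu\mapsto\mu-12$, $\lambda\mapsto\lambda+48-6\mu$ with weight drop $6$, terminating exactly when $\mu-12t\in\{0,4\}$, together with the lower bound on weights and the backward step via Proposition \ref{Hkphi}; your extra care about a possible factor $\Delta^c$ in that backward step (excluded by $q^0[\varphi_{k,m}]\ne0$, as in Proposition \ref{E4Eq-lam0}) simply makes explicit what the paper leaves implicit.
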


\subsection{Reduction of the $E_6$-MMDE}

We apply the reduction method described above to the second-order MMDE with leading coefficient $E_6$:
\begin{equation}\label{E6ME}
\bigl(E_6H_k^{[2]}+\mu E_4^2H_k+\lambda E_4E_6\bigr)\varphi_{k,m}=0.
\end{equation}
If $\mu=0$, this reduces to a Kaneko--Zagier equation. As in the proof of Proposition \ref{E4Eq-lam0}, we can show that when $\lambda=0$, equation \eqref{E6ME} has only split solutions.

Suppose that $\mu,\lambda\ne 0$. Then $H_k(\varphi_{k,m})=E_6\psi_{k-4,m}$, and substitution gives
$$
E_6H_{k-4}\psi_{k-4,m}+(\mu-6)E_4^2\psi_{k-4,m}+\lambda E_4\varphi_{k,m}=0.
$$
Since we seek non-split solutions, we may assume that $H_{k-4}(\psi_{k-4,m})\ne 0$. Thus, $H_{k-4}\psi_{k-4,m}=E_4\xi_{k-6,m}$ and  the equation  can be rewritten as
$$
E_6\xi_{k-6,m}+(\mu-6)E_4\psi_{k-4,m}+\lambda\varphi_{k,m}=0.
$$
Applying the operator $H_k$ and using the defining relations for $\psi_{k-4,m}$ and $\xi_{k-6,m}$, we obtain
$$
E_6H_{k-6}\xi_{k-6,m}+(\mu-12)E_4^2\xi_{k-6,m}+(\lambda-4\mu+24)E_6\psi_{k-4,m}=0.
$$
Again, we may assume that $H_{k-6}(\xi_{k-6,m})\ne 0$, since otherwise the original solution has split form.

1. Suppose that $\mu=12$. Cancelling the factor $E_6$ in the preceding equation and applying $H_{k-4}$, we reduce \eqref{E6ME} to an equation of Kaneko--Zagier type:
\begin{equation}\label{Ximu12}
\bigl(H_{k-6}^{[2]}+(\lambda-24)E_4\bigr)\xi_{k-6,m}=0,
\end{equation}
where
 where $H_{k-4}\psi_{k-4,m}=E_4\xi_{k-6,m}$ and  
 $H_k\varphi_{k,m}=E_6\psi_{k-4,m}$.

2.Suppose that $\mu\ne 12$. Then $\xi_{k-6,m}$ is divisible by $E_6$, so we write
$\xi_{k-6,m}=E_6\rho_{k-12,m}$.
The equation becomes
$$
\left(E_6H_{k-12}+(\mu-18)E_4^2\right)\rho_{k-12,m}+(\lambda-4\mu+24)\psi_{k-4,m}=0.
$$
For a non-split solution, we again have $H_{k-12}(\rho_{k-12,m})\ne 0$.
Applying $H_{k-4}$ to this equation, we reduce \eqref{E6ME} to
\begin{equation}\label{E6Reccur}
\left(E_6H_{k-12}^{[2]}+(\mu-24)E_4^2H_{k-12}+(\lambda-12\mu+168)E_4E_6\right)\rho_{k-12,m}=0
\end{equation}
where $H_{k-4}\psi_{k-4,m}=E_4E_6\rho_{k-12,m}$   and 
$H_k\varphi_{k,m}=E_6\psi_{k-4,m}$.
As above, we get a result similar to Theorem \ref{Th-E4ME}.

\begin{theorem}\label{Th-E6ME}
The equation  
$\bigl(E_6H_{k}^{[2]}+\mu E_4^2H_k+\lambda E_4E_6\bigr)\varphi_{k,m}=0$
can have a non-split solution only if $\mu=12d$ for some $d\in\Bbb Z_{\ge 0}$. Every such solution is obtained by lifting a non-split solution of a Kaneko--Zagier equation through the reduction process described above.
\end{theorem}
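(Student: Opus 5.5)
The proof mirrors the argument for Theorem \ref{Th-E4ME} and uses the two-step reduction just carried out. The key fact is that this reduction maps equation \eqref{E6ME} with parameters $(\mu,\lambda)$ to an equation \eqref{E6Reccur} of the same shape, with parameters $(\mu-24,\lambda-12\mu+168)$ and weight lowered by $12$. Inside each step there is an intermediate stage, after one substitution, where the first-order coefficient is $\mu-12$. I would organize the proof as an induction (or descent) on the weight $k$, which is legitimate because, for fixed index $m$, the weights of nonzero weak Jacobi forms are bounded below by \eqref{J2*} and the relations for odd weight and half-integral index.

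First I would dispose of the degenerate cases. By $\Delta$-homogeneity of $H_k$, coming from \eqref{H{k1+k2}} and $D_{12}\Delta=0$, we may assume $q^0[\varphi_{k,m}]\neq 0$. If $\lambda=0$, the analogue of Proposition \ref{E4Eq-lam0} shows that the solution is split. Its proof uses Theorem \ref{Ind1}(2) instead of (1), together with Corollary \ref{KerH2} and Proposition \ref{Hkphi}. If $\mu=0$ we already have a Kaneko--Zagier equation, so $d=0$. Otherwise, divisibility by $E_6$ gives $H_k\varphi_{k,m}=E_6\psi_{k-4,m}$. The zero of $E_6$ at $\tau=i$ and the freeness of \eqref{J2*} (applied to squares in the odd or half-integral cases) then give $H_{k-4}\psi_{k-4,m}=E_4\xi_{k-6,m}$. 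At each stage where some $H_*$ of the intermediate form vanishes, \eqref{KerH} and Proposition \ref{Hkphi}, traced back through the defining relations, show that $\varphi_{k,m}$ is split. The same happens whenever an intermediate form has split form, since Proposition \ref{Hkphi} lifts split forms upward.

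For the main step, if $\mu=12$ we arrive at \eqref{Ximu12}, which is a Kaneko--Zagier equation; so $d=1$, and $\varphi_{k,m}$ is a lift of a solution $\xi_{k-6,m}$. If $\mu\neq12$ we arrive at \eqref{E6Reccur} for $\rho_{k-12,m}$ with first-order coefficient $\mu-24$. If $\mu=24$, this equation is itself of Kaneko--Zagier type after cancelling $E_6$, so $d=2$. Otherwise we apply the same procedure to \eqref{E6Reccur}. After $t$ full steps the first-order coefficient is $\mu-24t$, and the intermediate coefficients are $\mu-24t-12$. So the process stops at a Kaneko--Zagier equation exactly when $\mu-12d'=0$ for some $d'\ge0$, that is, when $\mu=12d$.

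If $\mu$ is not of the form $12d$, the reduction never terminates in this way, while the weight drops by $12$ at each step. The descended form must therefore vanish after finitely many steps. Tracing back through the relations $H_k\varphi=E_6\psi$ and $H_{k-4}\psi=E_4E_6\rho$, using \eqref{KerH} and Proposition \ref{Hkphi}, then forces the original solution to be split.

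The main obstacle I expect is the bookkeeping of this backward tracing. The difficulty is to check that vanishing, or split form, of a descendant really forces split form of $\varphi_{k,m}$, given that $\varphi_{k,m}$ is recovered from $H_k\varphi_{k,m}$ only up to $\operatorname{Ker}H_k$. Here I would rely on Proposition \ref{Hkphi}, applied with $f_k$ normalized to constant term $1$; this normalization is possible after the $\Delta$-reduction. I would also check the case where the right-hand side is zero: then $\varphi_{k,m}$ lies in $\operatorname{Ker}H_k$, and the equation with $\lambda\neq0$ gives $\varphi_{k,m}=0$.
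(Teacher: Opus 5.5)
Your proposal is correct and follows essentially the paper's own argument. It uses the same two-stage descent $(\mu,\lambda)\mapsto(\mu-24,\lambda-12\mu+168)$ with Kaneko--Zagier termination at the intermediate coefficient $\mu-24t-12$ or the full-step coefficient $\mu-24t$, and the same weight-boundedness and backward tracing through \eqref{KerH} and Proposition \ref{Hkphi} when $\mu\notin 12\Bbb Z_{\ge0}$. One small correction: the divisibility $H_{k-4}\psi_{k-4,m}=E_4\xi_{k-6,m}$ comes from the zero of $E_4$ at $\tau=e^{2\pi i/3}$, where $E_6\neq 0$ (equivalently, from $E_4$ being prime in the free ring \eqref{J2*}). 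It does not come from the zero of $E_6$ at $\tau=i$.
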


\subsection{Applications to the elliptic genus}

In this subsection, we study possible MMDEs for the elliptic genera of Calabi--Yau varieties.

\begin{theorem}\label{E-CY2d}
Let $M_{2d}$ be a compact complex variety of even dimension $2d$ whose first Chern class vanishes in $H^2(M_{2d},\Bbb R)$. If its arithmetic genus is nonzero, then its elliptic genus $EG(M_{2d};\tau,z)\in J_{0,d}^w$ can satisfy an equation of the form
$$
\bigl(E_4H_0^{[2]}+\mu E_6H_0+\lambda E_4^2\bigr)
EG(M_{2d};\tau,z)=0
$$
only if $2d=2$. In this case, the equation is the first equation in Proposition \ref{Eq-phi01}.
\end{theorem}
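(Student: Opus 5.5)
Set $\varphi=EG(M_{2d};\tau,z)\in J^w_{0,d}$ and assume it satisfies $\bigl(E_4H_0^{[2]}+\mu E_6H_0+\lambda E_4^2\bigr)\varphi=0$. The plan is to use only the $q^0$-term of this identity, together with the known shape of the $q^0$-term of an elliptic genus, and read off constraints on $d,\mu,\lambda$.

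First we record the $q^0$-term of $\varphi$. For a variety with $c_1=0$ it is the $\chi_y$-genus, namely $q^0[\varphi]=\sum_{p=0}^{2d}(-1)^p\chi^p\,\zeta^{p-d}$ with $\chi^p=\sum_q(-1)^qh^{p,q}$. By Serre duality $\chi^{2d-p}=\chi^p$, and $\chi^0=\chi^{2d}$ is the arithmetic genus, which is nonzero by hypothesis. So the extreme coefficients $a(0,\pm d)$ are nonzero.

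Next we apply \eqref{Hkq0} twice. On $q^0$-terms, $H_0$ multiplies $\zeta^l$ by $-(3l^2/d-\tfrac12)$ and $H_2$ multiplies it by $-(3l^2/d+\tfrac32)$. The $q^0$-terms of $E_4$, $E_6$ and $E_4^2$ are all $1$. Hence for each $l$ with $a(0,l)\ne0$ we need
\[
P(x):=\Bigl(x-\tfrac12\Bigr)\Bigl(x+\tfrac32\Bigr)-\mu\Bigl(x-\tfrac12\Bigr)+\lambda=0,\qquad x=\tfrac{3l^2}{d}.
\]
Since $P$ is quadratic in $x$, at most two distinct values of $l^2$ can occur in $q^0[\varphi]$.

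This is where the Euler characteristic enters. We use $\varphi(\tau,0)=e(M)$, which gives $\sum_l a(0,l)=e(M)$. We also use the relation $\sum_p(-1)^p\chi^p(p-d)^2=\tfrac{d}{6}e(M)$ (Libgober--Wood), equivalently the vanishing of the quasi-modular $E_2$-correction in the $z^2$-Taylor coefficient of a weight $0$ form, which is available from the structure theorem \eqref{J2*}. The $q^0$-term is supported on $l^2\in\{d^2,l_1^2\}$ with $a(0,\pm d)\ne0$. Combining the two relations, and using the vanishing of $\varphi$'s $z$-Taylor coefficients of the appropriate weight (the coefficient of $z^2$ is $e(M)E_2$-free), forces strong constraints. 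In particular $x=3d$ must be a root of $P$.

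The cleanest route is probably not through Hodge numbers at all, but through the reduction of Theorem~\ref{Th-E4ME}. The elliptic genus is non-split whenever $d\ge1$ and $\chi^0\ne0$, since split forms have $q^0$-support $\zeta^{\pm a/2}$, which would make $\varphi$ a theta quotient, impossible in weight $0$ index $d$ except via $\varphi_{0,\frac32}$. So $\mu=12d'$ or $4+12d'$, and $\varphi$ arises by lifting a Kaneko--Zagier solution through $H_0(\varphi)=E_4^{j}\xi_{-2-\ldots,d}$. The weight after the first reduction is already $-2$, and further steps lower the weight by $4$ or $6$. But weak Jacobi forms of index $d$ have weight $\ge -2d$, and moreover $\xi$ must satisfy a KZ equation. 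Weight considerations give $\mu=4$, so $H_0\varphi=E_4\psi_{-2,d}$ with $(H_{-2}^{[2]}+\lambda E_4)\psi=0$.

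The main obstacle is then classifying the solutions $\psi_{-2,d}$ of this KZ equation that come from an elliptic genus. I would compare $q^0$-terms again. The root condition at $x=3d$, i.e. $l=\pm d$ (required since $\chi^0\ne0$), together with $\psi$ vanishing to order $\ge2$ at $z=0$, should force $q^0[\psi]$ to be supported on $l^2\in\{d^2,0\}$ with the combination $\zeta^{\pm d}-2$. The next $q^1$ coefficient should then be computed via \eqref{Hk-act} to show that consistency forces $d=1$. The resulting solution is $\psi=\varphi_{-2,1}$, and the equation becomes the one in Proposition~\ref{Eq-phi01} with $\lambda=-\tfrac54$.
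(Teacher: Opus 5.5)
Your outline follows the paper's route: reduce to $\mu=4$ by Theorem~\ref{Th-E4ME}, then analyse $q^0$-terms. But the decisive step ``weight considerations give $\mu=4$'' is not justified, and it carries the whole content of the theorem.

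Your own polynomial $P$ has roots summing to $\mu-1$. So a $q^0$-term supported on $l^2\in\{d^2,l_2^2\}$ gives $\mu=1+3d+3l_2^2/d$. For example, $d=5$, $l_2=0$ gives $\mu=16=4+12$. Theorem~\ref{Th-E4ME} permits this value, and it reduces to a Kaneko--Zagier equation of weight $-8$. The bound $k\ge -2d=-10$ does not exclude that weight, and for large $d$ it cannot exclude the weights $-6,-8,\dots$ coming from $\mu=12,16,\dots$.

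The missing input is a property of the equation, not a weight bound on $J^w_{k,d}$: the equation $(H_k^{[2]}+\lambda' E_4)\psi=0$ has no nonzero solution of even weight $k\le -4$ (Proposition~\ref{Pr-KZE}(4)). One way to see it:
\begin{itemize}
\item The $q^0$-condition for this equation is again quadratic in $3l^2/d$, so $q^0[\psi]$ is supported on at most two values of $l^2$.
\item A form of weight $-2j$ is divisible by $\varphi_{-2,1}^j$. For $j\ge 2$ this gives $\sum_l b(0,l)=\sum_l l^2 b(0,l)=0$.
\item With the symmetry $b(0,l)=b(0,-l)$, these two relations kill such a $q^0$-term. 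Then divide by $\Delta$ and repeat.
\end{itemize}
The case $\mu=0$ is also untouched by weight considerations, since it is a Kaneko--Zagier equation already in weight $0$. Here it is excluded because $\mu-1=3d+3l_2^2/d>0$.

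Your endgame is not an argument as written. Vanishing of $\psi_{-2,d}$ to order $2$ at $z=0$ gives only $\sum_l b(0,l)=0$ and does not force $l_2=0$. The proposed $q^1$ computation is left unspecified. It is also unnecessary: once $\mu=4$, the roots of $P$ sum to $3$, so $3d+3l_2^2/d=3$. That is $d=d^2+l_2^2$, the paper's relation $m=l_1^2+l_2^2$ with $l_1=d$, and it forces $d=1$, $l_2=0$.

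You must also exclude a $q^0$-term $A\zeta^{\pm d}$ with a single value of $l^2$. Your Libgober--Wood relation gives $2d^2A=\frac{d}{3}A$, hence $A=0$. Finally, for $d=1$ we have $J^w_{0,1}=\Bbb C\varphi_{0,1}$, and the roots $0$ and $3$ of $P$ fix $\mu=4$ and $\lambda=-\frac54$.
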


{\bf Remarks.} 1. As in \cite{AG23}, we may consider, for example, a (quasi) Calabi--Yau variety $M$ of even dimension $n$, that is, a compact K\"ahler variety, not necessarily simply connected, with trivial canonical class and $h^{p,0}(M)=0$ for $0<p<n$. Its arithmetic genus is then $2$.

2. The elliptic genus of a K3 surface, namely $2\varphi_{0,1}$, satisfies a third-order MDE (see \eqref{eqK3}). In \cite{AG23}, we proved that Calabi--Yau fourfolds whose elliptic genera satisfy a similar third-order MDE fall into only two types, with Euler characteristics $48$ and $-18$. For these fourfolds, passing to MMDEs does not reduce the order of the differential equations. Further details are given in the next section.
\smallskip

We now study second-order  $E_4$-MMDEs  and  $E_6$-MMDEs  for arbitrary
weak Jacobi forms of weight 0. The latter case  is considerably simpler. 
\begin{proposition}\label{E6-k0}
The equation 
$$\bigl(E_6H_{0}^{[2]}+\mu E_4^2H_0+\lambda E_4E_6\bigr)\varphi_{0,m}=0$$
has no non-split solutions in $J_{0,m}^w$ if $\mu\neq 0$ or $\lambda\neq 0$.
\end{proposition}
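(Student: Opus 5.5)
The plan is to run the $E_6$-reduction of \S 4.3 starting from weight $k=0$ and to show that no non-split lift can reach weight $0$. The case $\lambda=0$ is already settled: as remarked before Theorem \ref{Th-E6ME}, the proof of Proposition \ref{E4Eq-lam0} applies and gives only split solutions. So the real work is the case $\lambda\ne 0$, and this splits into $\mu=0$ and $\mu\ne 0$. Note that the statement also claims there is no non-split solution when $\mu=0,\lambda\ne 0$. Theorem \ref{Th-E6ME} alone does not rule this out, so that case needs a separate weight-$0$ argument.

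\emph{Case $\mu=0$, $\lambda\ne0$.} Cancelling $E_6$ gives the Kaneko--Zagier equation $(H_0^{[2]}+\lambda E_4)\varphi_{0,m}=0$. I would use the Taylor expansion at $z=0$. A weak Jacobi form of weight $0$ has $\varphi_{0,m}(\tau,0)=c$, a constant, and we may take $c\ne0$ after $\Delta$-reduction. The $z^2$-coefficient is quasi-modular, of the form $f_2=\alpha E_2 c+g_2$ with $g_2\in M_2=0$, where $\alpha$ is fixed by the index $m$.

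Evaluating $H_0$ and $H_2$ at $z=0$ expresses $H_0\varphi(\tau,0)$ and $H_0^{[2]}\varphi(\tau,0)$ through $c$, $f_2$ and the $z^4$-coefficient $f_4$. Here $f_4$ contains a genuine modular part $\beta E_4$ plus a fixed $E_2$-polynomial times $c$. The $z^0$-part of the equation then gives one linear relation between $\lambda c$ and $\beta$. Comparing $q^0$-terms via \eqref{Hkq0} gives, for each $l$ with $a(0,l)\ne0$, the quadratic condition
\[
\bigl(\tfrac{3l^2}{m}-\tfrac12\bigr)\bigl(\tfrac{3l^2}{m}+\tfrac32\bigr)+\lambda=0 .
\]
So at most one value of $l^2$ can occur in $q^0[\varphi_{0,m}]$. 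Combined with $\varphi(\tau,0)=c$, this forces $q^0[\varphi_{0,m}]=C(\zeta^{l}+\zeta^{-l})$ with $2C=c$. I would then show this lies in the split family $\varphi_{0,\frac32}(\tau,az)g$, and is otherwise impossible by the recurrence the KZ equation gives for higher $q$-terms. Proposition \ref{Hkphi} handles the uniqueness step.

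\emph{Case $\mu\ne0$, $\lambda\ne0$.} By Theorem \ref{Th-E6ME} we need $\mu=12d$ with $d\ge1$. The reduction writes $H_0\varphi_{0,m}=E_6\psi_{-4,m}$ and $H_{-4}\psi_{-4,m}=E_4\xi_{-6,m}$. It continues down to a non-split solution of a Kaneko--Zagier equation of weight $-6-12(d-1)$. Since $J^w_{k,m}=0$ for $k<-2m$, this is a finite problem, but it is not vacuous in general.

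The key extra input is the first step of the lift. Taking $q^0$ in $H_0\varphi_{0,m}=E_6\psi_{-4,m}$ gives
\[
-\bigl(\tfrac{3l^2}{m}-\tfrac12\bigr)a(0,l)=b(0,l),
\]
and the multiplier is nonzero for every $l$. Next, evaluate at $z=0$. Since $\psi_{-4,m}$ vanishes to order $\ge 4$ at $z=0$ (negative weight $-4$, from \eqref{J2*}), the right-hand side $E_6\psi_{-4,m}$ vanishes to order $\ge4$ at $z=0$. Therefore the left-hand side, computed from the Taylor coefficients $c$, $f_2$, $f_4$ of $\varphi_{0,m}$, must vanish at orders $z^0$ and $z^2$. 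I expect this to force $c=0$ and then the vanishing of the modular part of $f_4$.

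With $c=0$, the form $\varphi_{0,m}$ lies in the ideal $\varphi_{-2,1}J^w_{2,m-1}$ (and its analogues for odd weight or half-integral index). I then expect a weight/order count to conclude $\varphi_{0,m}\in\hbox{Ker\,}H_0$-type split forms, or to produce a contradiction.

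The main obstacle is this last Taylor-coefficient bookkeeping at $z=0$. One must determine exactly which $E_2$-dependent corrections appear in $H_0$ and $H_2$ applied to the $z^2$- and $z^4$-coefficients, and check that the resulting linear system in $(c,\beta)$ has only the trivial solution when $\mu=12d\ne0$. I would carry this out first for integral $m$ using the generators $\varphi_{0,1},\varphi_{-2,1}$, then handle the other parity classes by multiplying by $\varphi_{0,\frac32}$ or $\varphi_{-1,\frac12}$ as in \eqref{multE4}.
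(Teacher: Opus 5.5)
\textbf{The main case $\mu\neq0$, $\lambda\neq0$ has a genuine gap.} You correctly run the $E_6$-reduction down to a Kaneko--Zagier equation in negative even weight: weight $-6$ when $\mu=12$, and lower for $\mu=12d$ with $d\ge2$. You then call this ``not vacuous in general'' and try to replace it by Taylor bookkeeping. The missing input is Proposition~\ref{Pr-KZE}(4), recalled in the proof of Proposition~\ref{E4-k0}: if $k$ is even and $\lambda'\ne0$, a nonzero solution of $(H_k^{[2]}+\lambda'E_4)\varphi_{k,m}=0$ must have $k\ge-2$. This rules out every reduced equation at once. If the reduced constant happens to vanish (e.g.\ $\lambda=24$ in \eqref{Ximu12}), Corollary~\ref{KerH2} and \eqref{KerH} give $\xi_{-6,m}=0$, because $H_k$ has trivial kernel in negative even weight. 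Tracing back, $\psi_{-4,m}=0$ and $H_0\varphi_{0,m}=0$, so the solution is split. That is essentially the whole of the paper's proof. For $\mu=0$ the paper simply quotes Theorem 3.3 of [AG23], as in the proof of Proposition~\ref{E4-k0}.

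\textbf{Why the Taylor route cannot close this gap.} The $z^0$-coefficient of $H_0\varphi_{0,m}$ vanishes for every $\varphi_{0,m}$, since $H_0\varphi_{0,m}\in J^w_{2,m}$ and $M_2=0$. Order-$4$ vanishing of $E_6\psi_{-4,m}$ then imposes exactly one linear condition, namely $\beta=-\pi^4m^2c/27$ in your normalization, and this condition is independent of $\mu$. Adding the $z^0$-part of the equation gives $\lambda c=0$, so you do get $c=\beta=0$ as you expect. But that only says $\varphi_{0,m}$ vanishes to order $\ge6$ at $z=0$, which many weight-$0$ forms do, e.g.\ $E_6\varphi_{-2,1}^3\varphi_{0,1}^{m-3}$. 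Low-order Taylor data never sees $\mu$, so no ``weight/order count'' at this level can single out $\mu=12d$ or produce a contradiction. Also, $\frac{3l^2}{m}-\frac12$ does vanish when $6l^2=m$; it is nonzero on the support only because $\lambda\ne0$.

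\textbf{The case $\mu=0$.} Your argument here is a legitimate alternative to quoting [AG23], but its decisive step is not the vague recurrence for higher $q$-terms. The $z^2$-coefficient at $q^0$ gives $6\sum_l l^2a(0,l)=m\sum_l a(0,l)$. With a single value of $l^2$ this forces $6l^2=m$, so $H_0$ annihilates $q^0[\varphi_{0,m}]$, and the $q^0$-comparison then forces $\lambda=0$. Finally, $\Delta$-reduction does not let you assume $c\neq0$ while staying in weight $0$. The case $q^0[\varphi_{0,m}]=0$ lands in weight $-12$ and again needs Proposition~\ref{Pr-KZE}(4).
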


 We prove Proposition \ref{E6-k0} after Proposition \ref{E4-k0} below. The case of second-order $E_4$-MMDEs is more interesting.

\begin{proposition}\label{E4-k0}
Let $\varphi_{0,m}\in J_{0,m}^w$, with $m\in\frac{1}{2}\Bbb Z$, be a solution of
\begin{align}\label{E4MEk=0}
\bigl(E_4H_0^{[2]}+\mu E_6H_0+\lambda E_4^2\bigr)\varphi_{0,m}=0,
\end{align}
and suppose that $\varphi_{0,m}$ is not of the form $c\varphi_{0,\frac{3}{2}}(\tau,az)$.
Then $\mu=4$, and
$$
q^0[\varphi_{0,m}]=\sum_l a(0,l)\zeta^l\ne 0.
$$
Among the coefficients with non-negative exponents, exactly two are nonzero: $a(0,l_1)$ and $a(0,l_2)$, where $l_1>l_2\ge 0$. Moreover, $m=l_1^2+l_2^2$.
\end{proposition}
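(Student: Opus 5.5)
The plan is to read off everything possible from the $q^0$-term of \eqref{E4MEk=0}, and to get $\mu=4$ from the reduction behind Theorem \ref{Th-E4ME}.

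\emph{Step 1: the $q^0$-term is nonzero.} By $\Delta$-homogeneity, if $q^0[\varphi_{0,m}]=0$ then $\varphi_{0,m}=\Delta^c\psi_{-12c,m}$, where $\psi_{-12c,m}$ satisfies the same equation in weight $-12c$ and has nonzero $q^0$-term. For weight $0$ the reduction of \S4.2 starts from $H_0(\varphi_{0,m})=E_4\psi_{-2,m}$. I would first try to show that a weight-$0$ solution cannot be $\Delta$-divisible. The idea is to run the reduction in weight $-12c$ and use the lower bound on weights of weak Jacobi forms of fixed index. If this is awkward, I would instead rerun the $q^0$-computation below directly for $\psi_{-12c,m}$ and check that it is incompatible with the constraint coming from $z=0$ in Step 3.

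\emph{Step 2: a quadratic constraint.} Apply \eqref{Hkq0} twice with $k=0$, so that $H_0$ and then $H_2$ multiply $a(0,l)$ by $-(x_l-\tfrac12)$ and $-(x_l+\tfrac32)$, where $x_l=3l^2/m$. Since $E_4$ and $E_6$ have constant term $1$, the $q^0$-part of \eqref{E4MEk=0} reads
$$
\bigl(x_l^2+(1-\mu)x_l-\tfrac34+\tfrac{\mu}2+\lambda\bigr)a(0,l)=0 .
$$
So $a(0,l)\ne0$ forces $x_l$ to be a root of a fixed quadratic. Hence among $l\ge0$ at most two exponents carry nonzero coefficients.

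\emph{Step 3: exactly two exponents.} If only one exponent $l\ge0$ occurred, I would evaluate at $z=0$. Then $H_0(\varphi_{0,m})(\tau,0)$ is a modular form of weight $2$, hence zero, and its constant term gives $\sum_l(x_l-\tfrac12)a(0,l)=0$. With a single orbit this forces $x_l=\tfrac12$, so $q^0[H_0\varphi_{0,m}]=0$. The argument from \cite[Theorem 2.2]{AG23} (already used in Proposition \ref{E4Eq-lam0}) then gives $\varphi_{0,m}=c\,\varphi_{0,\frac32}(\tau,az)$, which is excluded. So there are exactly two exponents $l_1>l_2\ge0$, and their values $x_{l_1},x_{l_2}$ are the two roots. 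By Vieta, $3(l_1^2+l_2^2)/m=x_{l_1}+x_{l_2}=\mu-1$. Hence once $\mu=4$ is known, $m=l_1^2+l_2^2$ follows.

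\emph{Step 4: $\mu=4$, the main obstacle.} By Theorem \ref{Th-E4ME} together with the non-split hypothesis (Propositions \ref{E4Eq-lam0} and \ref{Hkphi}), we have $\mu\in\{12d,\,4+12d\}$. So it remains to exclude $\mu=0$ and the values $\mu\ge12$.

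For $\mu=0$ the equation is of Kaneko--Zagier type. The root sum is then $-1$, which is impossible since every $x_l\ge0$.

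For $\mu\ge12$ the reduction gives $H_0(\varphi_{0,m})=E_4^2\xi_{-6,m}$, where $\xi_{-6,m}$ satisfies \eqref{e-xik-6} with $k=0$. This determines $q^0[\xi_{-6,m}]=q^0[H_0\varphi_{0,m}]$, whose support is $\{l_1,l_2\}$ minus any exponent with $x_l=\tfrac12$. The weight-$(-6)$ quadratic is the analogue of Step 2 computed with \eqref{Hkq0} at $k=-6,-4$, with first-order coefficient $\mu-12$. Its roots must contain these $x_l$. I would compare its Vieta relations with those of Step 2, together with the $z=0$ constraint $\sum(x_l-\tfrac12)a(0,l)=0$, and derive a contradiction. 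If this is not enough by itself, I would iterate down the chain $\xi_{-6},\varepsilon_{-12},\dots$, using the fact that weights of nonzero weak Jacobi forms of index $m$ are bounded below, until the chain is forced to terminate at the $\mu=4$ stage. I expect this comparison of successive $q^0$-quadratics along the reduction to be the delicate part of the proof.
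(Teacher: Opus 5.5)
Your Steps 2 and 3 are sound. The Vieta relation $3(l_1^2+l_2^2)/m=\mu-1$, together with the positivity argument excluding $\mu=0$, is a neat self-contained substitute for parts 1--3 of Proposition~\ref{Pr-KZE} and for \cite[Theorem 3.3]{AG23}.

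\textbf{The gap is in Step 4.} You never actually exclude $\mu\ge 12$, and the comparison you propose cannot do it. The reduction is an exact rewriting of the equation, with $q^0[\xi_{-6,m}]=q^0[H_0\varphi_{0,m}]$. The $q^0$-polynomial of \eqref{e-xik-6} at $k=0$ is $(x-\tfrac{13}{2})(x-\tfrac{9}{2})-(\mu-12)(x-\tfrac{13}{2})+48-6\mu+\lambda=x^2+(1-\mu)x+\tfrac{\mu}{2}+\lambda-\tfrac34$, which is literally the quadratic of Step~2. Likewise, your $z=0$ constraint $\sum_l(x_l-\tfrac12)a(0,l)=0$ is just $\xi_{-6,m}(\tau,0)=0$ again. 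So the comparison yields no new information. Your fallback, iterating down the chain using the lower bound on weights, only disposes of $\mu\notin\{12d,4+12d\}$, which Theorem~\ref{Th-E4ME} already did. For $\mu=12,16,24,\dots$ the chain stops at a Kaneko--Zagier equation of weight $-6,-8,-12,\dots$. The weight bound for index $m$ is only $-2m$, so it does not rule these out.

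\textbf{The missing ingredient} is that Kaneko--Zagier equations have no nonzero solutions in even weight $\le -4$, Proposition~\ref{Pr-KZE}(4); this is exactly what the paper invokes. You can also get this inside your own framework with one extra evaluation at $z=0$:
\begin{itemize}
\item For $\mu\notin\{0,4\}$ the reduction gives $H_0\varphi_{0,m}=E_4^2\xi_{-6,m}$.
\item Both $\xi_{-6,m}(\tau,0)$ and $(H_{-6}\xi_{-6,m})(\tau,0)$ are modular forms of negative weight, hence zero.
\item Writing $q^0[\xi_{-6,m}]=\sum_l c(l)\zeta^l$, this gives $\sum_l c(l)=0$ and $\sum_l x_l c(l)=0$.
\item The support lies in the two orbits $\pm l_1,\pm l_2$ with $x_{l_1}\ne x_{l_2}$, so these two equations force $q^0[\xi_{-6,m}]=0$.
\item That means $q^0[H_0\varphi_{0,m}]=0$, contradicting Step~3.
\end{itemize}
The same double evaluation, of $\psi_{-12c,m}$ and of $H_{-12c}\psi_{-12c,m}$, is what your Step~1 needs. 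The single Step~3-type constraint you cite there is one linear condition on two unknowns and is not enough by itself.
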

\begin{proof}
By Proposition \ref{E4Eq-lam0}, if $\lambda=0$, every solution has split form and is therefore a constant multiple of $\varphi_{0,\frac{3}{2}}(\tau,az)$. If $\mu=0$, the equation is equivalent to
$$
\bigl(H_0^{[2]}+\lambda E_4\bigr)\varphi_{0,m}=0.
$$
In \cite[Theorem 3.3]{AG23}, we proved that this equation has only ``trivial'' split solutions in $J_{0,m}^w$. More precisely, every such solution has the form
$\varphi_{0,m}(\tau,z)=c\varphi_{0,\frac{3}{2}}(\tau,az)$ with $a\in\Bbb N$ and belongs to the kernel of the modular differential operator. Thus, these are the only possible split solutions of the equation in the proposition.
 
 We now assume that $\mu,\lambda\ne 0$. By the reduction process in Theorem \ref{Th-E4ME}, we must consider Kaneko--Zagier equations of negative weights $k=-2,-6,\ldots$. We recall the following result.

\begin{proposition}\label{Pr-KZE}(See \cite[Proposition 3.4]{AG23}.)
Let $\varphi_{k,m}\in J_{k,m}^w$ and
\begin{equation}\label{KZEk}
(H_k^{[2]}+\lambda  E_4)\varphi_{k,m}=0, \quad \lambda\ne 0.
\end{equation}
1. If $a(0,l_1)$, $a(0,l_2)$ are two non-zero Fourier coefficients of 
$\varphi_{k,m}$ with  $l_1^2\ne l_2^2$, then 
$2k+1=-\frac 3m(l_1^2+l_2^2)$.
\smallskip

\noindent
2. There cannot be three nonzero Fourier coefficients
$a(0,l_1)$, $a(0,l_2)$, and $a(0,l_3)$ with pairwise distinct values of $l_i^2$.
\smallskip

\noindent
3. If the nonzero coefficients in the $q^0$-term correspond to exactly two distinct values $l_1^2$ and $l_2^2$, then $k<0$ and
$$
q^0[\varphi_{k,m}]=
\begin{cases}
c(\zeta^{l_1}-\zeta^{l_2}-\zeta^{-l_2}+\zeta^{-l_1})
\quad&\text{for even}\ k,\\
a(\zeta^{l_1}-\zeta^{-l_1})+b(\zeta^{l_2}-\zeta^{-l_2})
\quad&\text{for odd}\ k.
\end{cases}
$$
4. The weight $k\ge -2$ if $k$ is even, and $k\ge -3$ if $k$ is odd.
\end{proposition}

A non-split solution of \eqref{E4MEk=0} can exist only if the corresponding reduced Kaneko--Zagier equation has a non-split solution of weight $k=-2,-6,\ldots$. By Proposition \ref{Pr-KZE}, only $k=-2$ is possible. This weight occurs at the first reduction, when $\mu=4$. Hence $\mu=4$ is the only value for which \eqref{E4MEk=0} can have a non-split solution.

If $q^0[\varphi_{0,m}]=0$, division by $\Delta$, followed by reduction, would yield a nonzero solution of a Kaneko--Zagier equation of weight at most $-12$, contradicting Proposition \ref{Pr-KZE}. Thus $q^0[\varphi_{0,m}]\ne 0$.

For $\mu=4$, equation \eqref{E4MEk=0} reduces, as in \eqref{Emu4}, to
\begin{align}\label{E4MEk=-2}
\bigl(H_{-2}^{[2]}+\lambda E_4\bigr)\psi_{-2,m}=0,
\quad H_0(\varphi_{0,m})=E_4\psi_{-2,m}.
\end{align}
We note that the case $q^0[\psi_{-2,m}]=c (\zeta^l+\zeta^{-l})$
is impossible, since $\psi_{-2,m}$ vanishes to order at least $2$ at $z=0$. Proposition \ref{Pr-KZE} therefore gives
$$
q^0[\psi_{-2,m}]=
c(\zeta^{l_1}-\zeta^{l_2}-\zeta^{-l_2}+\zeta^{-l_1}),\quad
l_1>l_2\ge 0.
$$ 
Moreover, for $k=-2$ we get $m=l_1^2+l_2^2$. 

Write $q^0[\varphi_{0,m}]=\sum_l a(0,l)\zeta^l$. If a nonzero coefficient $a(0,l)$ were vanished by the action of $H_0$ on the $q^0$-term, comparison of the corresponding coefficient in \eqref{E4MEk=0} would force $\lambda=0$, contrary to our assumption. Hence the $q^0$-terms of $\varphi_{0,m}$ and $\psi_{-2,m}$ have the same support. This proves Proposition \ref{E4-k0}.
\end{proof}

The proof of Proposition \ref{E6-k0} is similar. If $\mu=0$, we obtain a Kaneko--Zagier equation considered above. The first possible reduction to a Kaneko--Zagier equation occurs when $\mu=12$ and yields an equation for weight $-6$ (see \eqref{Ximu12}). By Proposition \ref{Pr-KZE}, this equation has no nonzero solutions.
\medskip

We recall (see \cite{Gr99}) the topological interpretation of the Fourier coefficients of the elliptic genus $EG(M;\tau,z)\in J_{0,n/2}^w$ of a compact complex variety $M$ of dimension $n$ with vanishing first Chern class. We have
$$
q^0[EG(M;\tau,z)]=\sum_{p=0}^{n}
(-1)^p\chi(M, \Omega^p_M)
\zeta^{n/2-p}
$$
where $\chi(M, \Omega^p_M)$ denotes the holomorphic Euler characteristic.
 If $M$ is K\"ahler, the coefficient of $\zeta^{n/2}$ is its arithmetic genus,
$\sum_{p=0}^n(-1)^p h^{p,0}(M)$.
For the even-dimensional Calabi--Yau varieties described in the first remark after Theorem \ref{E-CY2d}, this coefficient equals $2$.

By Proposition \ref{E4-k0}, the largest exponent $l_1$ occurring in the $q^0$-term of a non-split solution $\varphi_{0,m}$ of \eqref{E4MEk=0} satisfies $l_1\le\sqrt m$. For the elliptic genus of a variety with nonzero arithmetic genus, we have $m=n/2$ and $l_1=n/2$. Therefore,  $\frac n2=l_1^2+l_2^2$ which, for positive even $n$, is possible only when $n=2$. This proves Theorem \ref{E-CY2d}. 
\smallskip

Proposition \ref{E4-k0} shows that a non-split weight-zero solution of an $E_4$-MMDE has a highly restricted $q^0$-term. Using this restriction, we obtain the following result.

\begin{proposition}\label{E4E-08}
For positive integral or half-integral indices $m\le 8$, equations of the form \eqref{E4MEk=0} have exactly three non-split solutions, up to multiplication by a nonzero constant:
$\varphi_{0,1}(\tau,z)$, $\varphi_{0,\frac{5}{2}}(\tau,z)$, and $\varphi_{0,1}(\tau,2z)$, of indices $1$, $\frac{5}{2}$, and $4$, respectively.
\end{proposition}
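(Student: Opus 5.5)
By Proposition \ref{E4-k0}, a non-split solution of \eqref{E4MEk=0} forces $\mu=4$ and $q^0[\varphi_{0,m}]\ne0$. Its $q^0$-term has exactly two symmetric pairs of exponents $\pm l_1,\pm l_2$ with $l_1>l_2\ge0$ and $m=l_1^2+l_2^2$. The plan is to list every such pair with $m\le 8$ and, for each one, determine the solution space of the reduced system \eqref{E4MEk=-2} by finite linear algebra in the known spaces $J_{0,m}^w$ and $J_{-2,m}^w$.

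\emph{Step 1: enumerate candidates.} Here $l_1,l_2$ are both integers (for integral $m$) or both half-integers (for half-integral $m$). Solving $m=l_1^2+l_2^2\le8$ with $l_1>l_2\ge0$ gives
\begin{gather*}
(1,0),\ (2,0),\ (1,1)\text{ excluded},\ (2,1),\ (2,2)\text{ excluded};\\
(\tfrac32,\tfrac12),\ (\tfrac52,\tfrac12),\ (\tfrac52,\tfrac32),\ (\tfrac32,\tfrac12),
\end{gather*}
where the strict inequality $l_1>l_2$ removes the excluded pairs. The surviving indices are $m=1,4,5$ (integral) and $m=\tfrac52,\tfrac{13}2,\tfrac{17}2$ (half-integral). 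Since $\tfrac{17}{2}>8$, the half-integral list is really $m=\tfrac52,\tfrac{13}2$. In addition, Proposition \ref{Pr-KZE}(1) with $k=-2$ must be consistent with $m=l_1^2+l_2^2$, and it is automatically.

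\emph{Step 2: impose the support constraint for each candidate.} Using \eqref{J2*} and the half-integral index relations, write a general element of $J_{0,m}^w$ as a polynomial in $E_4,E_6,\varphi_{-2,1},\varphi_{0,1}$, multiplied by $\varphi_{0,\frac32}$ when $m$ is half-integral. Then require that the $q^0$-term is supported on $\zeta^{\pm l_1},\zeta^{\pm l_2}$ only.
\begin{itemize}
\item[] For $m=1$ this gives $\varphi_{0,1}$, which Proposition \ref{Eq-phi01} confirms is a solution.
\item[] For $m=\tfrac52$ it gives $\varphi_{0,\frac52}$, also confirmed by Proposition \ref{Eq-phi01}.
\item[] For $m=4$ the candidate $q^0$-term $\zeta^{\pm2}+c$ points to $\varphi_{0,1}(\tau,2z)$. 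This form is a solution because the substitution $z\mapsto az$ commutes with $H_k$ under the index change $m\mapsto a^2m$ (from \eqref{heat}, $H^{(a^2m)}$ acting on $\varphi(\tau,az)$ equals $(H^{(m)}\varphi)(\tau,az)$), so the equation for $\varphi_{0,1}$ transfers unchanged.
\end{itemize}

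To prove uniqueness, I will use the further constraint that $\psi_{-2,m}=E_4^{-1}H_0\varphi_{0,m}$ must be holomorphic. In particular $H_0\varphi_{0,m}$ must be divisible by $E_4$, which is a linear condition. After imposing it, I solve the Kaneko--Zagier equation $(H_{-2}^{[2]}+\lambda E_4)\psi_{-2,m}=0$ coefficient by coefficient in $q$. By Proposition \ref{Pr-KZE}(3) the $q^0$-term of $\psi_{-2,m}$ has the shape $c(\zeta^{l_1}-\zeta^{l_2}-\zeta^{-l_2}+\zeta^{-l_1})$. Comparing $q^0$-terms fixes $\lambda$, and the $q^1,q^2,\dots$ terms then give a linear system on the finitely many free coefficients in $J_{-2,m}^w$.

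\emph{Step 3: rule out $m=5$ and $m=\tfrac{13}2$.} I expect the main obstacle to be showing that these two candidates, with pairs $(2,1)$ and $(\tfrac52,\tfrac32)$, have no solution. For each, I would compute $\dim J_{-2,m}^w$, take the general $\psi_{-2,m}$ with the prescribed $q^0$-term, and check that the $q^1$ (or $q^2$) coefficients of the Kaneko--Zagier equation give an inconsistent system. Equivalently, I would check that $H_0\varphi_{0,m}$ is never divisible by $E_4$ for any $\varphi_{0,m}$ with the prescribed support. A practical way to organize this is to expand in the basis $E_4^aE_6^b\varphi_{-2,1}^c\varphi_{0,1}^d$ (times $\varphi_{0,\frac32}$ when needed) and evaluate at $\tau=e^{2\pi i/3}$, where $E_4=0$, to test divisibility. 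For the $m=1,\tfrac52,4$ cases, the same linear algebra should show a one-dimensional solution space, which gives uniqueness up to scalar.
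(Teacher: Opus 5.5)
Your plan follows the paper's proof: Proposition~\ref{E4-k0} reduces the indices to $m=l_1^2+l_2^2\in\{1,\frac52,4,5,\frac{13}{2}\}$, and $m=5,\frac{13}{2}$ are eliminated by a $q^1$-comparison in the Kaneko--Zagier equation for $\psi_{-2,m}$. The paper shortens your linear algebra by noting that $\psi_{-2,m}=\varphi_{-2,1}\psi_{0,m-1}$ is already determined by its $q^0$-term for these $m$, so there are no free coefficients; for example $\psi_{-2,5}=\varphi_{-2,1}\varphi_{0,4}$ forces $\lambda=\frac{19}{100}$, and the coefficient of $q\zeta^{5}$ in $(H_{-2}^{[2]}+\frac{19}{100}E_4)\psi_{-2,5}$ is $-\frac54-\frac{19}{100}\neq0$.

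Two slips. First, the pair for $m=\frac{13}{2}$ is $(\frac52,\frac12)$, not $(\frac52,\frac32)$. Second, divisibility of $H_0\varphi_{0,m}$ by $E_4$ is equivalent to solvability here only because $J^w_{0,m}$ and $J^w_{-2,m}$ inject into their $q^0$-terms for these indices, which makes $H_{-2}\psi_{-2,m}=-\lambda\varphi_{0,m}$ automatic.
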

\begin{proof} Let $\psi_{-2,m}$ be a corresponding solution of 
\eqref{E4MEk=-2}. We have
$\psi_{-2,m}=\varphi_{-2,1}\psi_{0,m-1}$, where
$q^0[\varphi_{-2,1}]=\zeta^{\pm 1}-2$.
For $m<6$ and $m=\frac{13}{2}$, the form $\psi_{0,m-1}$ is uniquely determined by its $q^0$-term (see \cite{Gr99}).

For $m\le 8$, the relation $m=l_1^2+l_2^2$, with admissible integral or half-integral exponents $l_1>l_2\ge 0$, is possible only for
$m=1,\frac{5}{2},4,5,\frac{13}{2}$.
The first two values give the solutions $\varphi_{0,1}$ and $\varphi_{0,\frac{5}{2}}$, whose equations are given in Proposition \ref{Eq-phi01}. These forms are proportional to the elliptic genera of K3 surfaces and Calabi--Yau fivefolds, respectively. For $m=4$, we obtain the rescaled form $\varphi_{0,1}(\tau,2z)$ of index $4$.

For $m=5$ and $m=\frac{13}{2}$, the prescribed $q^0$-terms determine the corresponding Jacobi forms uniquely up to multiplication by a constant. These forms are
$$
\psi_{-2,5}=\varphi_{-2,1}\varphi_{0,4}=\zeta^{\pm 2}-\zeta^{\pm 1}+O(q)
$$
where 
$
\varphi_{0,4}(\tau, z)=\zeta+1+\zeta^{-1}+O(q)=
{\vartheta(\tau, 3z)}/{\vartheta(\tau, z)},
$
and 
$$
\psi_{-2,\frac {13}{2}}=\psi_{-2,5}\varphi_{0,\frac 32}=
\zeta^{\pm \frac 52}-\zeta^{\pm \frac 12}+O(q).
$$
A direct comparison of the $q^1$-terms shows that neither $\psi_{-2,5}$ nor $\psi_{-2,\frac{13}{2}}$ satisfies a Kaneko--Zagier equation of weight $-2$.
\end{proof}

\section{MMDEs for weak Jacobi forms of weight 0 and index 2}

In \cite{AG23}, we studied monic MDEs of minimal order for weak Jacobi forms in $J_{0,2}^w$. In this section, we determine their MMDEs of minimal order.

Recall that every $\Phi_{0,2}\in J_{0,2}^w$ can be written as
$$
\Phi_{0,2}=a\psi_{0,2}+b\varphi_{0,2}=a\zeta^{\pm 2}+b \zeta^{\pm 1}+2(11 a+2b)+$$
$$+q \left(b\zeta^{\pm 3}+8(29 a-b) \zeta^{\pm 2}-(1024 a+b) \zeta^{\pm 1}+16(99 a+ b)\right)+O(q^2)
$$
where the two basis elements are
$$\psi_{0,2}=\zeta^{\pm 2}+22+q\left(232\zeta^{\pm 2}-1024\zeta^{\pm 1}+1584\right)+O(q^2),$$
$$\varphi_{0,2}=\zeta^{\pm 1}+4+q\left(\zeta^{\pm 3}-8\zeta^{\pm 2}-\zeta^{\pm 1}+16\right)+O(q^2).$$
We prove the following result.

\begin{theorem}
Any nonzero weak Jacobi form $\phi(\tau,z)\in J_{0,2}^w$ satisfies an explicitly determined third-order MMDE. Up to multiplication by a nonzero constant, the possibilities are as follows.
\smallskip

\noindent
1) Exactly three forms, $\varphi_{0,2}$, $\psi_{0,2}$, and $\rho_{0,2}=2\psi_{0,2}-11\varphi_{0,2}$, satisfy a monic third-order MDE.
\smallskip

\noindent
2)  Exactly one form,
$\omega_{0,2}=5\psi_{0,2}-308\varphi_{0,2}$,
satisfies the $E_4$-MMDE \eqref{omega}.
\smallskip

\noindent
3)  The two forms
$\alpha_{0,2}^{\pm}=\psi_{0,2}+(65\pm\frac{3}{5}\sqrt{13105})\varphi_{0,2}$
satisfy the $E_6$-MMDE \eqref{E6MMDEa}.
\smallskip

\noindent
4) Exactly one form,
$\beta_{0,2}=\psi_{0,2}+44\varphi_{0,2}$,
satisfies the $E_8$-MMDE \eqref{E8MMDE}.
\smallskip

\noindent
5) Every other form $\Phi_{0,2}\in J_{0,2}^w$ satisfies the MMDE \eqref{wt12MMDE}, whose leading coefficient $E_4^3+\lambda_4\Delta$ has weight $12$.
In particular, $\lambda_4=0$ for $$\Phi_{0,2}=\psi_{0,2}-4\varphi_{0,2}=
E_4\varphi_{-2,1}^2=E_4(\tau)\vartheta(\tau,z)^4/\eta(\tau)^{12}.$$
\end{theorem}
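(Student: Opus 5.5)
The approach is linear algebra in the ring $J^w_{2*,2}$. Since $J^w_{2*,*}=\Bbb C[E_4,E_6,\varphi_{-2,1},\varphi_{0,1}]$, every weak Jacobi form of weight $2t$ and index $2$ is a combination of $\varphi_{0,1}^2 A_{2t}+\varphi_{0,1}\varphi_{-2,1}B_{2t+2}+\varphi_{-2,1}^2C_{2t+4}$ with $A,B,C$ modular forms. Hence the orbit of $\Phi=a\psi_{0,2}+b\varphi_{0,2}$ under iterates of $H_0$ lives in a free module of rank $3$ over $M_*(SL_2(\Bbb Z))$.

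First I will compute the action of $H_k$ on the three monomials $\varphi_{0,1}^2,\varphi_{0,1}\varphi_{-2,1},\varphi_{-2,1}^2$. For this I use $H_{-2}(\varphi_{-2,1})=-\frac12\varphi_{0,1}$ and $H_0(\varphi_{0,1})=-\frac52E_4\varphi_{-2,1}$, the product rule \eqref{H{k1+k2}}, and a product rule for $H$ on a product of two Jacobi forms. The latter gives a cross term $\partial_z\varphi\,\partial_z\psi$, which I will not handle symbolically. Instead I will determine each image by comparing $q^0$ and $q^1$ terms via \eqref{Hk-act}, exactly as in Proposition \ref{Eq-phi01}, since the relevant spaces $J^w_{2t,2}$ have small dimension.

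This produces $3\times 3$ matrices $M_k$ with modular-form entries, with $H_k$ acting as $v\mapsto M_k v+\Bbb D v$. Writing $v_0=(a',b',c')$ for $\Phi$ (constants obtained from $\psi_{0,2}=\ldots$), I obtain $v_j$ representing $H_0^{[j]}\Phi$ for $j=1,2,3$. These are four vectors in a rank-$3$ free module over the domain $M_*$, so they are linearly dependent. The dependency is given by Cramer's rule: the $3\times3$ minors $\det(v_1,v_2,v_3)$, $\det(v_0,v_2,v_3)$, and so on, yield an identity $f_sH_0^{[3]}\Phi+f_{s+2}H_0^{[2]}\Phi+f_{s+4}H_0\Phi+f_{s+6}\Phi=0$. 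Here the coefficients are modular forms whose entries depend polynomially on $(a,b)$.

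I will then check that the coefficient of $H_0^{[2]}$ can be taken to vanish. Either this follows from a direct computation, or from replacing the equation by its reduced form: for $\Phi=\psi_{0,2}-4\varphi_{0,2}=E_4\varphi_{-2,1}^2$ the module degenerates, so I will treat that case directly. After dividing by the gcd of the coefficients, the leading coefficient $f_s$ is generically $\det(v_0,v_1,v_2)$ and has weight $0+2+4+6$ shifted appropriately. I expect weight $12$, so $f_s=\alpha(a,b)E_4^3+\beta(a,b)\Delta$.

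The classification then consists of determining when $f_s$ and the remaining coefficients share a common factor $E_4$, $E_6$, $E_4^2$, $\Delta$, and so on, so that the weight drops. A monic equation arises exactly when all coefficients are divisible by a weight-$12$ form; this recovers the three forms of \cite{AG23}. Divisibility by $E_4^2$ gives the $E_4$-MMDE, and the condition becomes a linear equation in $a:b$, giving $\omega_{0,2}$. Divisibility by $E_6$ gives an $E_8$-leading-coefficient case or an $E_6$ case; the $E_6$ condition is a quadratic equation in $b/a$, which explains the surd $\sqrt{13105}$ in $\alpha^\pm_{0,2}$. I must also verify that no first- or second-order (M)MDE exists for any $\Phi$. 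This uses the analysis of \S4: by Proposition \ref{E4-k0} and Proposition \ref{E6-k0}, together with the index-$2$ constraint $m=l_1^2+l_2^2$, which is impossible for $m=2$ with $l_1>l_2$ only if $(1,1)$ fails the strict inequality. For general leading coefficients, a rank argument on $(v_0,v_1,v_2)$ shows that order $2$ would force $\det(v_0,v_1,v_2)=0$.

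I expect the main obstacle to be the bookkeeping of these common-factor conditions. Each condition is a polynomial in $(a,b)$, and I must show that the listed forms are exactly its solutions. I will first try to compute everything using the $q$-expansion to order $q^2$ of each coefficient, since a weight-$\le 18$ modular form is determined by few coefficients.
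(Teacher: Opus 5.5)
Your module-theoretic strategy is viable and genuinely different from the paper's. As written, however, it contains one step that fails, and the bookkeeping of the cases is partly misassigned.

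\textbf{The step that fails.} You propose to ``check that the coefficient of $H_0^{[2]}$ can be taken to vanish.'' It cannot, for any form other than the three monic ones. Compare $q^0$-terms in a relation $f_sH_0^{[3]}\Phi+f_{s+2}H_0^{[2]}\Phi+f_{s+4}H_0\Phi+f_{s+6}\Phi=0$. By \eqref{Hkq0}, with $x=-\frac{3l^2}{2}$, the coefficient of $\zeta^l$ is $a(0,l)$ times the cubic
$f_s(0)(x+\frac12)(x-\frac32)(x-\frac72)+f_{s+2}(0)(x+\frac12)(x-\frac32)+f_{s+4}(0)(x+\frac12)+f_{s+6}(0)$.
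When $ab(11a+2b)\ne0$, this cubic must vanish at $x=-6,-\frac32,0$, so it equals $f_s(0)\,x(x+\frac32)(x+6)$. Hence $f_{s+2}(0)=12f_s(0)$, $f_{s+4}(0)=\frac{73}{4}f_s(0)$ and $f_{s+6}(0)=-\frac{11}{4}f_s(0)$. Here $f_s(0)\ne0$ may be assumed: if $f_s(0)=0$, all four constant terms vanish and you can divide the relation by $\Delta$. This is exactly the paper's $q^0$ system. It is why every non-monic equation in the statement carries $12E_6$, $12E_4^2$, $12E_4E_6$ or $12E_4^2E_6$ in front of $H_0^{[2]}$ (see \eqref{omega}). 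In your language, the minor $\det(v_0,v_1,v_3)$ is never zero outside the monic cases. Do not read the summation range in the displayed definition of an MMDE literally. Simply drop this step, since the Cramer vector needs no such normalization. Likewise, nothing degenerates at $E_4\varphi_{-2,1}^2$: there $\det(v_0,v_1,v_2)$ is a nonzero multiple of $E_4^3$ and the minors have gcd $1$, which is precisely the assertion $\lambda_4=0$.

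\textbf{The case dictionary.} Let $g$ be the gcd of the four minors. The minimal leading coefficient is $\det(v_0,v_1,v_2)/g$, of weight $12-\mathrm{wt}(g)$. So the cases correspond as follows:
\begin{itemize}
\item $g=E_4^2$ gives the $E_4$-MMDE ($\omega_{0,2}$);
\item $g=E_6$ gives the $E_6$-MMDE ($\alpha^{\pm}_{0,2}$);
\item $g=E_4$ gives the $E_4^2$-MMDE ($\beta_{0,2}$);
\item $g$ of weight $12$ gives a monic MDE.
\end{itemize}
A leading coefficient $E_4E_6$ can never be minimal, because it would require $\mathrm{wt}(g)=2$. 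Your sentence attributing the $E_8$ case to divisibility by $E_6$ is therefore backwards. Also, the divisibility conditions are cubic in $(a:b)$, not linear.

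\textbf{Lower order.} For order $\le2$, Propositions \ref{E4-k0} and \ref{E6-k0} only cover leading coefficients $E_4$ and $E_6$. What your rank argument actually needs is $\det(v_0,v_1,v_2)\ne0$ for every $(a,b)\ne0$. Equivalently, its $E_4^3$-coefficient and its $\Delta$-coefficient must have no common zero. The first is a nonzero multiple of $ab(11a+2b)$, readable from $q^0$-terms. The second is proportional to $(44a-b)(4a+b)(308a+5b)$, by the paper's formula for $\lambda_4$. This verification must be part of your explicit computation.

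\textbf{Comparison of routes.} The paper fixes a candidate leading coefficient in turn and solves the $q^0$ system for the lower coefficients. It then imposes vanishing of the $q^1$-term, using that index-$2$ forms of weight $\le18$ are determined by their $q^0$- and $q^1$-terms. Each case is a small linear computation, and the $E_4E_6$ case is eliminated by hand.

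Your route computes the action of $H$ on the basis $\varphi_{0,1}^2,\varphi_{0,1}\varphi_{-2,1},\varphi_{-2,1}^2$ once. By the product rule this matrix is independent of $k$, and $q^0$-terms suffice to find it, since the target spaces have weight $<8$. You then use that the module of relations is free of rank one over the UFD $\Bbb C[E_4,E_6]$, generated by the minors divided by their gcd. This buys you existence for every $\Phi$ and uniqueness of the minimal equation up to scalars. It also explains the generic weight $12$, gives an a priori list of the possible leading weights, and excludes order $\le2$ for arbitrary leading coefficients. The price is handling cubic forms in $(a,b)$ in the gcd conditions.
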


The results of the preceding sections show that no nonzero Jacobi form in $J_{0,2}^w$ satisfies an $E_4$- or $E_6$-MMDE of order one or two. In \cite[Theorem 4.1]{AG23}, we proved that, up to multiplication by a nonzero constant, exactly three forms --- $\varphi_{0,2}$, $\psi_{0,2}$, and $2\psi_{0,2}-11\varphi_{0,2}$ --- satisfy a monic third-order MDE.

To analyze MMDEs, we need explicit formulae for the action of the modular differential operator. By \eqref{Hk-act}, the $q^0$- and $q^1$-terms of the Fourier expansions are
\begin{align*}
q^{0,1}[H_0(\Phi_{0,2})]
&=-\tfrac{11}{2}a\zeta^{\pm 2}-b\zeta^{\pm 1}+11a+2b\\
&\quad+q\bigl(-b\zeta^{\pm 3}+(1496a-52b)\zeta^{\pm 2}\\
&\qquad\qquad-(11264a-23b)\zeta^{\pm 1}+19536a+152b\bigr),
\end{align*}
\begin{align*}
q^{0,1}[H_0^{[2]}(\Phi_{0,2})]
&=\tfrac{165a}{4}\zeta^{\pm 2}+3b\zeta^{\pm 1}
-\tfrac{3}{2}(11a+2b)\\
&\quad+q\bigl(3b\zeta^{\pm 3}+(6534a-234b)\zeta^{\pm 2}\\
&\qquad\qquad-(101376a-243b)\zeta^{\pm 1}+205524a+1668b\bigr),
\end{align*}
and
\begin{align*}
q^{0,1}[H_0^{[3]}(\Phi_{0,2})]
&=-\tfrac{3135}{8}a\zeta^{\pm 2}-15b\zeta^{\pm 1}
+\tfrac{21}{4}(11a+2b)\\
&\quad+q\bigl(-15b\zeta^{\pm 3}+(19800a-585b)\zeta^{\pm 2}\\
&\qquad\qquad-(709632a+1449b)\zeta^{\pm 1}+1745568a+13926b\bigr).
\end{align*}
Having treated the three exceptional forms above, we may assume that $a$, $b$, and $11a+2b$ are all nonzero. We now systematically examine the possible leading coefficients of MMDEs, beginning with the Eisenstein series $E_4$.
\smallskip

\textbf{1. The $E_4$ case.} Suppose that $\Phi_{0,2}$ satisfies
\begin{equation}\label{E4MMDE}
\bigl(E_4H_0^{[3]}+\lambda_1 E_6 H_0^{[2]}+\lambda_2E_4^2H_0+\lambda_3E_4E_6\bigr)\Phi_{0,2}=0.
\end{equation}
Comparing the $q^0$-terms gives the linear system
$$
\left\{
\begin{aligned}
\tfrac{165}{4}\lambda_1-\tfrac{11}{2}\lambda_2+\lambda_3& =\tfrac{3135}{8}\\
3\lambda_1-\lambda_2+\lambda_3& =15\\
\tfrac{3}{2}\lambda_1-\lambda_2-2\lambda_3& =\tfrac{21}{4}.
\end{aligned}
\right.
$$
Its unique solution is $\lambda_1=12$, $\lambda_2=\frac{73}{4}$, and $\lambda_3=-\frac{11}{4}$.
Substituting these values into the operator in \eqref{E4MMDE} and calculating the $q^1$-term, we obtain
\begin{align*}
&\bigl(E_4H_0^{[3]}+12E_6H_0^{[2]}+\tfrac{73}{4}E_4^2H_0-\tfrac{11}{4}E_4E_6\bigr)\Phi_{0,2}\\
&\qquad=-864(308a+5b)q(\zeta^{\pm 2}+8\zeta^{\pm 1}-18)+O(q^2).
\end{align*}
Since a weak Jacobi form of weight $10$ and index $2$ is uniquely determined by its $q^0$- and $q^1$-terms, this gives the identity
$$
\bigl(E_4H_0^{[3]}+12E_6H_0^{[2]}+\tfrac{73}{4}E_4^2H_0-\tfrac{11}{4}E_4E_6\bigr)\Phi_{0,2}
=-864(308a+5b)\Delta\varphi_{0,1}\varphi_{-2,1}.
$$
Thus, up to multiplication by a nonzero constant, the only form satisfying a third-order $E_4$-MMDE is
$\omega_{0,2}=5\psi_{0,2}-308\varphi_{0,2}$.
Dividing by $E_4$, we can write its equation as
\begin{equation}\label{omega}
\bigl(H_0^{[3]}+12 \frac{E_6}{E_4} H_0^{[2]}+ \frac{73}{4}E_4H_0-\frac{11}{4}E_6\bigr)\omega_{0,2}=0.
\end{equation}

Note that the form $\omega_{0,2}$ is also the most exceptional case among the forms in $J_{0,2}^w$ with respect to monic MDEs: it satisfies the sixth-order equation
\begin{multline*}\label{HH6}
\bigl(H^{[6]}_0
-\tfrac{983}{4}E_4H^{[4]}_0
+\tfrac{357}{2}E_6H^{[3]}_0+\\
\tfrac{11343}{2}E_4^2H^{[2]}_0
+\tfrac{2069}{16}E_4E_6H_0+
\tfrac{19217}{16}E_6^2\bigr)\,\omega_{0,2}=0.
\end{multline*}
For this form, passing from a monic MDE to a non-monic one gives the greatest reduction in order.
\medskip

\textbf{2. The $E_6$ case.}
Suppose that $\Phi_{0,2}$ satisfies
\begin{equation*}\label{E6MMDE}
\bigl(E_6H_0^{[3]}+\lambda_1E_4^2H_0^{[2]}+\lambda_2E_4E_6H_0
+(\lambda_3E_4^3+\lambda_4\Delta)\bigr)\Phi_{0,2}=0.
\end{equation*}
The Eisenstein series have constant term 1, whereas $\Delta$ has constant term $0$. Comparing the $q^0$-terms therefore gives the same linear system as in the $E_4$ case. Substituting the values of $\lambda_1$, $\lambda_2$, and $\lambda_3$ found above, we obtain
\begin{align*}
&\bigl(E_6H_0^{[3]}+12E_4^2H_0^{[2]}+\tfrac{73}{4}E_4E_6H_0
-\tfrac{11}{4}E_4^3+\lambda_4\Delta\bigr)\Phi_{0,2}\\
&\quad=q\Bigl((a\lambda_4+584496a-4320b)\zeta^{\pm 2}\\
&\qquad\qquad+(b\lambda_4-2128896a+22896b)\zeta^{\pm 1}\\
&\qquad\qquad+2(11a+2b)\lambda_4+4343328a-3456b\Bigr)+O(q^2).
\end{align*}
Since a weak Jacobi form of weight 12 and index 2 is uniquely determined by its $q^0$- and $q^1$-terms, this expression vanishes identically if and only if
$$
\left\{
\begin{aligned}
a\lambda_4+ 584496a-4320b&=0\\
b\lambda_4-2128896a+22896b& =0\\
2(11a+2b)\lambda_4+4343328a-3456b &=0.
\end{aligned}
\right.
$$
Solving this system with a computer algebra system gives 
\begin{equation}\label{sqrtab2}
b=\bigl(65\pm \tfrac{3}{5}\sqrt{13105}\bigr)a, \quad 
\lambda^{\pm}_4 =432 \bigl(-703\pm6 \sqrt{13105}\bigr)
\end{equation} 
where signs $\pm$ coincide. Thus, up to multiplication by a nonzero constant, there are exactly two exceptional Jacobi forms,  
$$\alpha^{\pm}_{0,2}=\psi_{0,2}+(65\pm \tfrac{3}{5}\sqrt{13105})\varphi_{0,2},$$
satisfying a third-order $E_6$-MMDEs
\begin{equation}\label{E6MMDEa}
\bigl(E_6H_0^{[3]}+12 E_4^2 H_0^{[2]}+\tfrac{73}{4}E_4E_6H_0+\bigl(-\tfrac{11}{4}E_4^3+\lambda^{\pm}_4\Delta\bigr)\bigr)\alpha^{\pm}_{0,2}
=0.
\end{equation}

\textbf{3. The $E_4^2$ case.}
Suppose that $\Phi_{0,2}$ satisfies
\begin{equation*}\label{E8MMDE}
\bigl(E_4^2H_0^{[3]}+\lambda_1 E_4E_6 H_0^{[2]}+(\lambda_2E_4^3+\lambda_4\Delta)H_0+\lambda_3E_4^2E_6\bigr)\Phi_{0,2}=0.
\end{equation*}
As in the $E_4$ and $E_6$ cases, comparison of the $q^0$-terms gives
$\lambda_1=12$, 
$\lambda_2=\frac{73}{4}$, 
$\lambda_3=-\frac{11}{4}$. 
Substituting these values and calculating the $q^1$-term, we obtain
\begin{align*}
&\bigl(E_4^2H_0^{[3]}+12E_4E_6H_0^{[2]}
+(\tfrac{73}{4}E_4^3+\lambda_4\Delta)H_0
-\tfrac{11}{4}E_4^2E_6\bigr)\Phi_{0,2}\\
&\quad=q\Bigl(-\bigl(\tfrac{11}{2}a\lambda_4+266112a+4320b\bigr)\zeta^{\pm 2}\\
&\qquad\qquad-(b\lambda_4+2128896a+34560b)\zeta^{\pm 1}\\
&\qquad\qquad+(11a+2b)\lambda_4+4790016a+77760b\Bigr)+O(q^2).
\end{align*}
 A weak Jacobi form of weight $14$ and index $2$ is uniquely determined by its $q^0$- and $q^1$-terms. Solving the resulting system with a computer algebra system shows that the expression vanishes identically only in the following two cases: $b=44a$, $\lambda_4=-82944$ and $5b=-308a$, $\lambda_4=0$. The first case gives the exceptional form $\beta_{0,2}=\psi_{0,2}+44\varphi_{0,2}$, which satisfies the $E_4^2$-MMDE
\begin{equation}\label{E8MMDE}
\bigl(E_4^2H_0^{[3]}+12E_4E_6H_0^{[2]}
+(\tfrac{73}{4}E_4^3-82944\Delta)H_0
-\tfrac{11}{4}E_4^2E_6\bigr)\beta_{0,2}=0.
\end{equation}
The second case, $5b=-308a$, follows directly from the $E_4$-MMDE \eqref{omega}.
 
\textbf{4. The $E_4E_6$ case.} We next consider
\begin{multline*}
(E_4E_6H_0^{[3]}+(12E_4^3+\lambda_4\Delta) H_0^{[2]}
+\tfrac{73}4E_4^2E_6H_0 \\
+(-\tfrac{11}4E_4^4+\lambda_5E_4\Delta))\Phi_{0,2}=0
\end{multline*}
where we have used the coefficients determined by our previous comparison of the $q^0$-terms. Calculating the $q^1$-term gives
\begin{align*}
&\bigl(E_4E_6H_0^{[3]}+(12E_4^3+\lambda_4\Delta)H_0^{[2]}
+\tfrac{73}{4}E_4^2E_6H_0
-\tfrac{11}{4}E_4^4+\lambda_5E_4\Delta\bigr)\Phi_{0,2}\\
&\quad=q\Bigl((\tfrac{165}{4}a\lambda_4+a\lambda_5+584496a-4320b)\zeta^{\pm 2}\\
&\qquad\qquad+(3b\lambda_4+b\lambda_5-2128896a+22896b)\zeta^{\pm 1}\\
&\qquad\qquad-\tfrac{3}{2}(11a+2b)\lambda_4
+2(11a+2b)\lambda_5+4343328a-3456b\Bigr)+O(q^2).
\end{align*}
Again, a weak Jacobi form of weight $16$ and index $2$ is uniquely determined by its $q^0$- and $q^1$-terms. The corresponding system has only the solutions $5b=-308a$, $\lambda_4=-20736$, $\lambda_5=4752$, and 
$b=(65\pm \tfrac{3}{5}\sqrt{13105})a$, $\lambda_4=0$, 
$\lambda_5=432(\pm 6 \sqrt{13105}-703)$ with matching signs. All these solutions correspond to forms for which we have already obtained MMDEs, so this case yields no new exceptional forms.
\smallskip

\textbf{5. The case of weight 12.} 
Finally, we consider MMDEs with leading coefficient of weight $12$:
\begin{multline}\label{wt12MMDE}
\left((E_4^3+\lambda_4\Delta)H_0^{[3]}+12E_4^2E_6 H_0^{[2]}
+(\tfrac{73}4E_4^4+\lambda_5E_4\Delta)H_0+\right.\\
\left.+(-\tfrac{11}4E_4^3E_6+\lambda_6E_6\Delta)\right)\Phi_{0,2}=0.
\end{multline}
The $q^1$-term of the left-hand side is
\begin{align*}
q\Bigl(
&-\bigl(\tfrac{3135}{8}a\lambda_4+\tfrac{11}{2}a\lambda_5-a\lambda_6
+266112a+4320b\bigr)\zeta^{\pm 2}\\
&-(15b\lambda_4+b\lambda_5-b\lambda_6+2128896a+34560b)\zeta^{\pm 1}\\
&+\tfrac{21}{4}(11a+2b)\lambda_4+(11a+2b)\lambda_5
+2(11a+2b)\lambda_6+4790016a+77760b
\Bigr).
\end{align*}
A weak Jacobi form of weight $18$ and index $2$ is uniquely determined by its $q^0$- and $q^1$-terms, since there are no nonzero weak Jacobi forms of weight $-6$ and index $2$. The resulting linear system in $\lambda_4$, $\lambda_5$, and $\lambda_6$ has the unique solution
\begin{align*}
\lambda_4&=\frac{16(44a-b)(4a+b)(308a+5b)}{3ab(11a+2b)},\\
\lambda_5&=\frac{4(308a+5b)(46288a^2+12680ab-47b^2)}{3ab(11a+2b)},\\
\lambda_6&=\frac{4(44a-b)(308a+5b)(484a+13b)}{3ab(11a+2b)}.
\end{align*}
The excluded cases $a=0$, $b=0$, and $11a+2b=0$ correspond to the forms satisfying monic third-order MDEs. The constants $\lambda_i$ can also be expressed in terms of the ratio $c=b/a$. In particular, $\lambda_4=0$ when $c=-4$.

\medskip

\noindent
{\bf The simplest MMDEs for the elliptic genera of Calabi--Yau fourfolds.}

We conclude the paper by presenting the simplest differential equations for the elliptic genera of strict Calabi--Yau fourfolds. By \cite{Gr99}, we have
$EG(M_4;\tau,z)=2\psi_{0,2}+b\varphi_{0,2}$.
Thus, the elliptic genus is uniquely determined by the Euler characteristic $e(M_4)=48+6b$.

Theorem 5.1 gives three special elliptic genera satisfying particularly simple differential equations:
$$
2\psi_{0,2},\quad \rho_{0,2},\quad 2\beta_{0,2},
\qquad\text{for }e(M_4)=48,\,-18,\,576,
$$
respectively. For the first two forms, monic third-order MDEs were constructed in \cite[(4.4) and (4.5)]{AG23}:
\begin{equation*}\label{HkEpsi02}
\bigl(H_0^{[3]}-\tfrac{263}{4}E_4H_0+\tfrac{121}{4}E_6\bigr)\psi_{0,2}=0,
\end{equation*}
\begin{equation*}\label{HErho02}
\bigl(H_0^{[3]}-\tfrac{335}{4}E_4H_0-\tfrac{275}{4}E_6\bigr)\rho_{0,2}=0.
\end{equation*}
The third exceptional equation is the $E_4^2$-MMDE \eqref{E8MMDE} for $\beta_{0,2}$, which can be rewritten as
\[
\bigl(H_0^{[3]}+12\frac{E_6}{E_4}H_0^{[2]}
+\bigl(48\frac{E_6^2}{E_4^2}-\frac{119}{4}E_4\bigr)H_0
-\frac{11}{4}E_6\bigr)\beta_{0,2}=0.
\]
The Jacobi form $2\beta_{0,2}$ is the elliptic genus of a strict Calabi--Yau fourfold with Euler characteristic $576=24^2$. Such a variety was constructed in \cite{EJK} as an elliptically fibered Calabi--Yau fourfold. In $F$-theory, it is dual to a gauge theory with gauge group $SO(8)^2$.

A further example is the form $E_4\varphi_{-2,1}^2$, which satisfies an $E_4^3$-MMDE. Dividing this equation by $E_4^3$ introduces the additional meromorphic term $E_6^3/E_4^3$. Since $\varphi_{-2,1}(\tau,0)=0$, a strict Calabi--Yau fourfold whose elliptic genus is proportional to this form would have Euler characteristic $0$.
\medskip

\noindent
{\bf Acknowledgements.}
The study has been funded within the framework of the HSE University Basic Research Program (HSE-BR-2025-060).

\bibliographystyle{amsplain}

\end{document}